\newcommand{\Bohr}{\operatorname{Bohr}}
\newcommand{\Cay}{\operatorname{Cay}}
\newtheorem{theorem}{Theorem}
\newtheorem{lemma}[theorem]{Lemma}
\newtheorem{proposition}[theorem]{Proposition}
\newtheorem{corollary}[theorem]{Corollary}
\numberwithin{theorem}{section}
\numberwithin{equation}{section}
\theoremstyle{definition}
\newtheorem{observation}[theorem]{Observation}
\newtheorem{definition}[theorem]{Definition}
\newtheorem{question}[theorem]{Question}
\newtheorem{remark}[theorem]{Remark}
\title[Separating recurrence properties ]{Separating measurable recurrence from strong recurrence via rigidity sequences}
\date{\today}
\author{John T. Griesmer}
\email{jtgriesmer@gmail.com}
\address{Department of Applied Mathematics and Statistics, Colorado School of Mines, Golden, Colorado}
\begin{document}

\begin{abstract}
If $G$ is an abelian group, we say $S\subseteq G$ is a \emph{set of recurrence} if for every probability measure preserving $G$-system $(X,\mu,T)$ and every $D\subseteq X$ having $\mu(D)>0$, there is a $g\in S$ such that $\mu(D\cap T^{g}D)>0$.  We say $S$ is a \emph{set of strong recurrence} if for every set $D$ having $\mu(D)>0$ there is a $c>0$ such that $\mu(D\cap T^{g}D)>c$ for infinitely many $g\in S$.    We call $S$ \emph{measure expanding} if for all $g\in G$, the translate $S+g$ is a set of recurrence. A \emph{rigidity sequence}  for $(X,\mu,T)$ is a sequence of elements $s_n\in G$ satisfying $\lim_{n\to\infty} \mu(D\triangle T^{s_n}D)=0$ for all measurable $D\subseteq X$.

For all but countably many countable abelian groups $G$, we prove that if $S\subseteq G$ is measure expanding, there is a sequence of elements $s_n\in S$ such that $\{s_n:n\in \mathbb N\}$ is also measure expanding and for all $t\in G$, $(s_n+t)_{n\in \mathbb N}$ is a rigidity sequence for some free weak mixing measure preserving $G$-system. The special case where $S=G$ proves Conjecture 1.6 of \cite{Ackelsberg}, due to Ackelsberg.

As a consequence, we prove that for every countably infinite abelian group $G$ and every measure expanding set $S\subseteq G$ there is a subset $S'\subseteq S$ such that $S'$ is measure expanding and no translate of $S'$ is a set of strong recurrence.
\end{abstract}

\maketitle

\setcounter{tocdepth}{1}

\tableofcontents

\section{Introduction}

\subsection{Sets of recurrence, uniform distribution, and rigidity sequences}\label{sec:Recurrence} If $G$ is a countable abelian group, a \emph{probability measure preserving $G$-system} (or simply, ``$G$-system'') is a triple $(X,\mu,T)$ where $(X,\mu)$ is a probability measure space and $T$ is an action of $G$ on $X$ by measure preserving transformations, meaning $\mu(T^{g}D)=\mu(D)$ for every measurable $D\subseteq X$ and all $g\in G$.

\begin{definition}
Let $G$ be a countable abelian group and $S\subseteq G$. We say that $S$ is
\begin{enumerate}
  \item[$\bullet$] a \emph{set of recurrence} if for every $G$-system $(X,\mu,T)$ and every $D\subseteq X$ having $\mu(D)>0$, there exists $g\in S$ such that $\mu(D\cap T^{g}D)>0$.

      \smallskip

\item[$\bullet$] \emph{measure expanding} if every translate of $S$ is a set of recurrence, meaning that for all $h\in G$, the set $S+h$ is a set of recurrence.\footnote{The terminology is inspired by the fact that $S$ being measure expanding is equivalent to the condition that for every ergodic $G$-system $(X,\mu,T)$ and all $D\subseteq X$ with $\mu(D)>0$, we have $\mu\bigl(\bigcup_{s\in S}T^sD\bigr)=1$.  However, we do not need this fact for our constructions.}

\smallskip

\item[$\bullet$] a \emph{set of strong recurrence} if for every $G$-system $(X,\mu,T)$ and every $D\subseteq X$ having $\mu(D)>0$, there exists $c>0$ and infinitely many $g\in S$ such that $\mu(D\cap T^{g}D)>c$.

    \smallskip

\item[$\bullet$] a \emph{set of optimal recurrence} if for every $G$-system $(X,\mu,T)$, every $D\subseteq X$ having $\mu(D)>0$, and all $\varepsilon>0$, there is a $g\in S$ such that $\mu(D\cap T^{g}D)>\mu(D)^2-\varepsilon$.
\smallskip

\item[$\bullet$] a \emph{set of rigidity} if $S$ is infinite and there is a nontrivial $G$-system $(X,\mu,T)$ such that  for all measurable $D\subseteq X$ and all $\varepsilon>0$, $\{g\in S: \mu(D \triangle T^{g}D)>\varepsilon\}$ is finite.
 \end{enumerate}
We say  $(s_{n})_{n\in \mathbb N}$ is a \emph{rigidity sequence  for} $(X,\mu,T)$ if $\lim_{n\to \infty} \mu( D\triangle T^{s_{n}}D)=0$ for every measurable $D\subseteq X$.   Note that $S$ is a set of rigidity if and only if every enumeration of $S$ is a rigidity sequence.
\end{definition}

The set $S_2:=\{1, 4, 9, \dots\}$ of perfect squares is a set of recurrence,  as shown independently by Furstenberg \cite{Fdiagonal}  and Sárközy \cite{Sarkozy}.  The set of perfect squares is also a set of strong recurrence, and this fact can be deduced from the proofs in \cite{Fdiagonal} and \cite{Sarkozy}.  However, it is not measure expanding: for every $n\in \mathbb N$, $S_2+n$ is not a set of recurrence (cf.~\cite{FurstenbergPoincare}).

We are motivated to study measure expanding sets partly due to their combinatorial properties, seen in \cite{BjorklundFishPlunnecke} and implicitly in \cite{GriesmerIsr} and \cite{BBF}.

One condition that implies $S$ is measure expanding is the existence of a sequence of elements $s_n\in S$ which is \emph{Hartman uniformly distributed} (Harmtan-u.d.), meaning $\lim_{N\to\infty} \frac{1}{N}\sum_{n=1}^N \chi(s_n)=0$ for every nontrivial character $\chi$ of $G$ (see \S\ref{sec:LCA} for definitions).  For example $(\lfloor n^{5/2}\rfloor)_{n\in \mathbb N}$ is Hartman-u.d., as is every sequence $(\lfloor n^{c}\rfloor)_{n\in \mathbb N}$ where $c>0$, $c\notin \mathbb N$.   There is a plethora of examples in the literature (\cite{Niederreiter},\cite{Wierdl},\cite{NairPoincare1},\cite{BKQW}), hence a plethora of measure expanding sets. Until \cite{GrPop}, every known example of a measure expanding set contained a Hartman-u.d.~sequence.   A sequence $(s_n)_{n\in \mathbb N}$ being Hartman-u.d.~is preserved under translations, and implies $\{s_n:n\in \mathbb N\}$ is a set of optimal recurrence; cf.~\cite[Theorem 1]{NairPoincare1}, \cite{FanSchneiderRecurrence}, \cite{Wierdl}). The previously known examples of measure expanding sets therefore have the property that every one of their translates is a set of optimal recurrence, while the definition of ``measure expanding'' requires only that every translate be a set of recurrence. The main result of \cite{GrPop} found measure expanding sets $S\subseteq \mathbb Z$ with the property that every translate of $S$ is a set of rigidity for some weak mixing $\mathbb Z$-system, and \cite{Ackelsberg} generalized this construction from $\mathbb Z$ to arbitrary countable abelian groups. Consequently these measure expanding sets $S$ are not sets of strong recurrence, by Lemma \ref{lem:Translate} below. In particular such an $S$ cannot contain a Hartman-u.d.~sequence. Theorems \ref{thm:Main0} and Corollary \ref{cor:Main} below extend the results of \cite{GrPop} and \cite{Ackelsberg}, by showing that such examples are ubiquitous: if $S\subseteq G$ is measure expanding, then there is a subset $S'\subseteq S$ which is also measure expanding, and no translate of $S'$ is a set of strong recurrence.  This exhibits some nontrivial structure common to all measure expanding sets.

\subsection{Results}\label{sec:results} The \emph{exponent} of an abelian group $G$ (in additive notation) is the least $n\in \mathbb N$ such that $ng=0$ for every $g\in G$, if such $n$ exists.  Otherwise, we say that $G$ has \emph{infinite exponent}.  Every group of finite exponent is a direct sum of cyclic groups - see Theorem 5.3 in Chapter 3 of \cite{Fuchs} or Theorem 6 in Chapter 8 of \cite{KaplanskyBook} for a proof.

The statements of our results are somewhat complicated for an arbitrary countable abelian group, so we exclude a small class of groups from the initial discussion.  Theorem \ref{thm:MainFinite} covers the excluded groups.

Given a sequence of groups $(G_{n})_{n\in \mathbb N}$, their direct sum is denoted $\bigoplus_{n=1}^{\infty} G_{n}$.

\begin{definition}\label{def:Sigmaq}
  Given $q\in \mathbb N, q\geq 2$,  let $\Sigma_{q}:=\bigoplus_{n=1}^{\infty} \mathbb Z/q\mathbb Z$, meaning $\Sigma_{q}$ is the direct sum of countably many copies of $\mathbb Z/q\mathbb Z$.

  We say a countably infinite abelian group is \emph{reasonably homogeneous} if $G$ either has infinite exponent or $G = \Sigma_{q_1} \oplus \Sigma_{q_2} \oplus \cdots \oplus \Sigma_{q_r}$ for some $r, q_i\in \mathbb N$.
\end{definition}

 A $G$-system $(X,\mu,T)$ is \emph{ergodic} if every  $D\subseteq X$ satisfying $\mu(D\triangle T^{g}D)=0$ for every $g\in G$ has $\mu(D)=0$ or $\mu(D)=1$.  We say $(X,\mu,T)$ is \emph{weak mixing} if the product system $(X\times X,\mu\times \mu,T\times T)$ is ergodic.

We say that $S\subseteq G$ is a \emph{set of strong recurrence for weak mixing systems} if for every weak mixing $G$-system $(X,\mu,T)$ and all $D\subseteq X$ having $\mu(D)>0$, there is a $c>0$ such that $\mu(D\cap T^{g}D)>c$ for infinitely many $g\in S$.

A $G$-system $(X,\mu,T)$ is \emph{free} if for every $h\neq 0\in G$, $\mu(\{x\in X:T^h x = x\})=0$.

Our main results are the following theorem and corollary. Theorem \ref{thm:Main0} proves Conjecture 1.6 of \cite{Ackelsberg}.  The special case of that conjecture where $G$ has finite exponent was resolved by \cite{CherryKQ}.
\begin{theorem}\label{thm:Main0}
  Let $G$ be a reasonably homogeneous countable abelian group.  Then for every measure expanding set $S\subseteq G$, there is a measure expanding set $S'\subseteq S$ such that for all $g\in G$, $S'+g$ is a set of rigidity for some free weak mixing $G$-system.
\end{theorem}
Theorem \ref{thm:Main0} is proved in \S\ref{sec:CompactnessAndProof}, after introducing several building blocks.

The following corollary requires no restriction on $G$.
\begin{corollary}\label{cor:Main}
Let $G$ be a countably infinite abelian group.  If $S\subseteq G$ is measure expanding, then there is a measure expanding set $S'\subseteq S$ such that no translate of $S'$ is a set of strong recurrence for weak mixing systems.  In particular, no translate of $S'$ is a set of strong recurrence.
\end{corollary}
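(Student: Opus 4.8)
The plan is to derive the corollary from Theorem~\ref{thm:Main} together with a single implication: \emph{a set of rigidity for a weak mixing system is never a set of strong recurrence for weak mixing systems}. Applying Theorem~\ref{thm:Main} to the measure expanding set $S$ produces $S'\subseteq S$ such that, for every $m\in\mathbb Z$, the translate $S'+m$ is both a set of recurrence and a set of rigidity for some weak mixing system. Because every translate of $S'$ is a set of recurrence, $S'$ is measure expanding by definition. The final assertion is then immediate: weak mixing systems form a subclass of all measure preserving systems, so a set that is not a set of strong recurrence for weak mixing systems is \emph{a fortiori} not a set of strong recurrence. Thus everything reduces to showing that if $A:=S'+m$ is a set of rigidity for a weak mixing system, then $A$ is not a set of strong recurrence for weak mixing systems.

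To prove this implication I would pass to the spectral picture. Fix a weak mixing system $(X,\mu,T)$ and a measurable $D\subseteq X$ with $\mu(D)>0$, write $g:=\mathbf 1_D-\mu(D)$, and let $\sigma_g$ be the finite, symmetric, and (by weak mixing) continuous spectral measure of $g$ on $\mathbb T=\mathbb R/\mathbb Z$. A direct computation with the Koopman operator gives $\mu(D\cap T^nD)=\mu(D)^2+\widehat{\sigma_g}(n)$, where $\widehat{\sigma_g}(n)=\int_{\mathbb T}e^{2\pi i n t}\,d\sigma_g(t)$. Consequently $A$ fails to be a set of strong recurrence for weak mixing systems precisely when there exist a weak mixing system and a set $D$ of positive measure with $\mu(D\cap T^aD)\to 0$ as $a\to\infty$ through $A$, equivalently with $\widehat{\sigma_g}(a)\to-\mu(D)^2$ along $A$. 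In other words, the task is to manufacture an honest positive-measure set whose self-correlations decay to zero along the translate.

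The rigidity hypothesis supplies the raw material for this construction. Rigidity of the weak mixing system along $A$ means the Koopman operators satisfy $U^{a}\to I$ strongly as $a\to\infty$ through $A$; restricting to the nonconstant functions and normalizing the maximal spectral type to a continuous probability measure $\sigma$ on $\mathbb T$, this says $\int_{\mathbb T}\lvert e^{2\pi i a t}-1\rvert^{2}\,d\sigma(t)\to 0$, i.e.\ $\widehat{\sigma}(a)\to 1$ along $A$. The plan is to use this continuous measure $\sigma$ as a seed to build a weak mixing system, together with a set of positive measure, realizing the anti-correlation $\widehat{\sigma_g}(a)\to-\mu(D)^2$ demanded above.

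I expect this realization step to be the main obstacle. The naive attempt---take a Gaussian system whose spectral measure is some continuous $\rho$ built from $\sigma$ and let $D$ be a level set of the generating variable---does not suffice: positivity of the Gaussian joint densities forces the self-correlation of any positive-measure set to stay bounded away from $0$ unless the far stronger ``anti-rigidity'' condition $\widehat{\rho}(a)\to-1$ holds along $A$, which rigidity alone does not provide (there are, for instance, parity obstructions). The real work is therefore to produce a tailored, necessarily non-Gaussian, weak mixing system whose spectral data are derived from $\sigma$ and which carries an actual indicator function with a small negative Fourier limit along $A$; verifying weak mixing simultaneously with the correlation decay is the crux of the argument.
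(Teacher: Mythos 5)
Your reduction is set up correctly at the top level (apply Theorem~\ref{thm:Main}, note that measure expanding follows from every translate being a set of recurrence, and that ``not strong recurrence for weak mixing systems'' implies ``not strong recurrence''), but the implication you reduce everything to --- \emph{a set of rigidity for a weak mixing system is never a set of strong recurrence for weak mixing systems} --- is both unproved in your write-up and not the statement the paper relies on. Lemma~\ref{lem:Translate} gives this conclusion only for the translate $S+m$ with $m\neq 0$, and the restriction is essential to the known argument: for $m\neq 0$ one uses a Rokhlin tower in the rigid weak mixing system $(X,\mu,T)$ to find $D$ with $\mu(D\cap T^{m}D)=0$, whence rigidity gives $\mu(D\cap T^{s+m}D)\to\mu(D\cap T^{m}D)=0$ along $S$; for $m=0$ this collapses, since rigidity forces $\mu(D\cap T^{s}D)\to\mu(D)$, which is \emph{large}, in the only system the hypothesis hands you. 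You correctly identify that for the untranslated set one would have to manufacture a different weak mixing system with anti-correlation along $A$, but you then declare this construction to be ``the crux'' and leave it open. That is the gap: the entire corollary rests on a step you have not carried out, and which is substantially stronger than anything established in the paper.

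The fix is cheap and is exactly why Theorem~\ref{thm:Main} is stated for \emph{all} translates. Given $m\in\mathbb Z$, write $S'+m=(S'+(m-1))+1$. By Theorem~\ref{thm:Main}, $S'+(m-1)$ is a set of rigidity for some nontrivial weak mixing system, so Lemma~\ref{lem:Translate} applied with the nonzero shift $1$ shows $S'+m$ is not a set of strong recurrence for weak mixing systems. No spectral realization problem ever arises, and the delicate question of whether a rigidity set for a weak mixing system can itself be a set of strong recurrence is deliberately sidestepped rather than answered.
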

The special cases of Theorem \ref{thm:Main0} and Corollary \ref{cor:Main} where $S=G=\mathbb Z$ are Theorem 2.1 and Corollary 2.3 of \cite{GrPop}.

 Corollary \ref{cor:Main} is proved in \S\ref{sec:Subgroups}.

\begin{remark}\label{rem:DisjointSpectrum} The classification of abelian groups with finite exponent shows that, up to isomorphism, the class of reasonably homogenous groups excludes only countably many countable abelian groups, the exceptional groups having the form $F\oplus \Sigma_{q_1}\oplus \cdots \oplus \Sigma_{q_r}$, where $F$ is a finite abelian group. When $F$ is nontrivial, the statement of Theorem \ref{thm:Main0} no longer holds.  For example, in the group $G=(\mathbb Z/3\mathbb Z)\oplus \Sigma_2$, if a rigidity sequence $(g_n)_{n\in \mathbb N}$ is written as $(g_{n,1},g_{n,2})$ with $g_{n,1}\in \mathbb Z/3\mathbb Z$, then $g_{n,1}=0$ for all but finitely many $n$ -- see Section 7.1 of \cite{Ackelsberg} for a proof and further discussion.   Thus, if  $(g_n)_{n\in \mathbb N}$ is a rigidity sequence for a free weak mixing $G$-system, then $(g_n+(1,0))_{n\in \mathbb N}$ cannot be a rigidity sequence for a free weak mixing $G$-system.  Consequently, there is no subset $S\subseteq G$ having the property that every translate of $S$ is a set of rigidity for some weak mixing $G$-system.
\end{remark}

The phenomenon described above is the only obstruction to every translate of a given sequence being a rigidity sequence, so we recover much of Theorem \ref{thm:Main0} in Theorem \ref{thm:MainFinite}, which covers the previously excluded groups.

\begin{theorem}\label{thm:MainFinite}
  Let $F$ be a finite abelian group and $r\in \mathbb N$, $2\leq q_i\in \mathbb N$.  Let $G=F\oplus (\Sigma_{q_1} \oplus\cdots \oplus \Sigma_{q_r})$, so that $H:= \{0\}\oplus \Sigma_{q_1} \oplus\cdots \oplus \Sigma_{q_r}$ is a finite index subgroup of $G$.

  If $S\subseteq G$ is such that for all $h\in H$, $S+h$ is a set of recurrence, then there is a sequence $(s_n)_{n\in \mathbb N}$ of elements of $S$ such that for all $h\in H$, $(s_n+h)$ is a rigidity sequence for some free weak mixing $G$ system, and $\{s_n+h:n\in \mathbb N\}$ is a set of recurrence.
\end{theorem}

We do not provide an exact characterization of those groups satisfying the conclusion of Theorem \ref{thm:Main0}, as our main interest is in Corollary \ref{cor:Main}.

The proof of Theorem \ref{thm:Main0} combines the ideas behind the construction of Kronecker sets and $K_{q}$-sets (see \cite{Rudin}, Theorem 5.2.2) with the proof of Proposition 5.2 of \cite{GrPop}.   While the present article is self contained, the reader may find our arguments more natural after reading the preceding references and/or \cite{Ackelsberg}.  The main technical fact underlying our proofs is the seemingly innocuous Lemma \ref{lem:Aura}.

\subsection{Rigidity and Fourier transforms of measures}\label{sec:DualEtc}

Let $G$ be a countable abelian group and $\widehat{G}$ its Pontryagin dual (see \S\ref{sec:LCA} for definitions).  The \emph{Fourier transform} $\hat{\sigma}$ of a measure $\sigma$ on $\widehat{G}$ is the function $\hat{\sigma}:G\to \mathbb C$ given by $\hat{\sigma}(g):=\int \chi(g) \,d\sigma(\chi)$.  We say that $\sigma$ is \emph{continuous} if $\sigma(\{\chi\})=0$ for all $\chi\in\widehat{G}$; in other words, $\sigma$ is atomless. Rigidity sequences for weak mixing systems can be characterized entirely in terms of Fourier transforms of measures.  

\begin{lemma}\label{lem:FourierCharacterize}
Let $(s_n)_{n\in \mathbb N}$ be a sequence of elements of $G$.  By \cite[Lemma 2.7 (2)]{Ackelsberg}, the following are equivalent.
  \begin{enumerate}
    \item[(i)] There is a weak mixing $G$-system $(X,\mu,T)$ for which $(s_n)_{n\in \mathbb N}$ is a rigidity sequence.
    \item[(ii)] There is a continuous Borel probability measure $\sigma$ on $\widehat{G}$ with $\lim_{n\to\infty}\hat{\sigma}(s_n)=1$.
  \end{enumerate}
  \end{lemma}
Because we want each $T^g$ with $g\neq 0$ to act nontrivially, we need some more detail relating the support of $\sigma$ to the corresponding $G$-system:
\begin{lemma}\label{lem:FourierCharacterizeFree}
By \cite[Lemma 3.3]{Ackelsberg} the following conditions are equivalent:
\begin{enumerate}
\item[(i)] There is a free weak mixing $G$-system for which $(s_n)_{n\in \mathbb N}$ is a rigidity sequence.
\item[(ii)] There is a continuous Borel probability measure $\sigma$ on $\widehat{G}$ such that $\lim_{n\to\infty}\hat{\sigma}(s_n)=1$ and the subgroup generated by the support of $\sigma$ is dense in $\widehat{G}$.
\end{enumerate}
\end{lemma}

By Lemma \ref{lem:FourierCharacterizeFree}, one may construct a free weak mixing  $G$-system $(X,\mu,T)$ with a rigidity sequence $(s_{n})_{n\in \mathbb N}$ by finding a continuous probability measure $\sigma$ on $\widehat{G}$ such that $\lim_{n\to \infty} \hat{\sigma}(s_{n})=1$ and the support of $\sigma$ generates a dense subgroup of $\widehat{G}$.

\begin{proposition}\label{prop:KroneckerMeasures}
    Let $G$ be reasonably homogeneous countable abelian group.  If $S\subseteq G$ is measure expanding, there is a subset $S'\subseteq S$ and a  continuous probability measure $\sigma$ on $\widehat{G}$ such that
    \begin{enumerate}
      \item[(i)] $S'$ is measure expanding;
     \item[(ii)] $\lim_{n\to \infty} \hat{\sigma}(s_n)=1$, where $(s_n)$ enumerates $S'$;
     \item[(iii)] $\sigma(U)>0$ for every nonempty open $U\subseteq \widehat{G}$.
      \end{enumerate}
\end{proposition}
Proposition \ref{prop:KroneckerMeasures} is proved in \S\ref{sec:SaekiMeasures}.

\subsection{Compactness properties of recurrence, proof of Theorem \ref{thm:Main0}}\label{sec:CompactnessAndProof}
\begin{definition}\label{def:deltaRecurrence}
A set $S\subseteq G$ is a \emph{set of $\delta$-recurrence} if for every $G$-system $(X,\mu,T)$ and every $D\subseteq X$ such that $\mu(D)>\delta$, there exists $g\in S$ such that $\mu(D\cap T^{g}D)>0$.
\end{definition}
Note that $S$ is a set of recurrence if and only if $S$ is a set of $\delta$-recurrence for all $\delta>0$.

\begin{lemma}[cf.~{\cite[Lemma 6.4]{ForrestThesis}}]\label{lem:UniformRecurrence}
   Let $\delta\geq 0$.  If $S\subseteq G$ is a set of $\delta$-recurrence then for all $\delta'>\delta$, there is a finite set $S'\subseteq S$ such that $S'$ is a set of $\delta'$-recurrence.
\end{lemma}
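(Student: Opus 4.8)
The plan is to argue by contraposition, mirroring the proof of Lemma \ref{lem:ForrestUniformRecurrence} but carrying the threshold $\delta$ throughout, via a weak-$*$ compactness argument in the space of shift-invariant measures on $\{0,1\}^{\mathbb Z}$. Two reductions come for free. First, a set of $\delta$-recurrence is automatically a set of $\delta'$-recurrence for every $\delta'\geq\delta$ (if $\mu(D)>\delta'$ then $\mu(D)>\delta$), so if $S$ itself is finite we may take $S'=S$; thus the content is only that $S'$ may be taken \emph{finite}, and we assume $S$ infinite. Second, the case $\delta'\geq 1$ is vacuous, so assume $\delta\leq\delta'<1$. Now fix $\delta'>\delta$ and suppose, toward a contradiction, that no finite subset of $S$ is a set of $\delta'$-recurrence. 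Enumerate $S=\{n_1,n_2,\dots\}$ and set $F_k=\{n_1,\dots,n_k\}$. For each $k$ the set $F_k$ fails to be a set of $\delta'$-recurrence, so we may choose a witness: a measure preserving system $(X_k,\mu_k,T_k)$ and $D_k\subseteq X_k$ with $\mu_k(D_k)>\delta'$ and $\mu_k(D_k\cap T_k^{n}D_k)=0$ for every $n\in F_k$.

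The key device is to transport each witness into the two-sided full shift so that the distinguished set survives as a cylinder. Let $\Omega=\{0,1\}^{\mathbb Z}$ with the shift $\sigma$, and define $\Phi_k\colon X_k\to\Omega$ by $\Phi_k(x)(n)=\mathbf 1_{D_k}(T_k^{n}x)$. This map is measurable and equivariant, $\Phi_k\circ T_k=\sigma\circ\Phi_k$, so $\lambda_k:=(\Phi_k)_*\mu_k$ is $\sigma$-invariant. Writing $A=\{\omega\in\Omega:\omega(0)=1\}$ for the time-zero cylinder, one checks that $\Phi_k^{-1}(A)=D_k$ and $\Phi_k^{-1}(A\cap\sigma^{-n}A)=D_k\cap T_k^{n}D_k$, so that $\lambda_k(A)=\mu_k(D_k)>\delta'$ and $\lambda_k(A\cap\sigma^{-n}A)=\mu_k(D_k\cap T_k^{n}D_k)=0$ for all $n\in F_k$ (using that correlations are even, $\mu_k(D_k\cap T_k^{-n}D_k)=\mu_k(D_k\cap T_k^{n}D_k)$, the sign convention on $\sigma$ is immaterial). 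The point of this encoding is that $A$ and each $A\cap\sigma^{-n}A$ are clopen, hence have \emph{continuous} indicators.

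Since $\Omega$ is compact metrizable, the space of $\sigma$-invariant Borel probability measures on $\Omega$ is weak-$*$ compact and metrizable, so the sequence $(\lambda_k)$ has a weak-$*$ convergent subsequence $\lambda_{k_j}\to\lambda$, and $\lambda$ is again $\sigma$-invariant. Because $A$ is clopen, $\lambda(A)=\lim_j\lambda_{k_j}(A)\geq\delta'>\delta$. For each fixed $m$ and all $k_j\geq m$ we have $n_m\in F_{k_j}$, whence $\lambda_{k_j}(A\cap\sigma^{-n_m}A)=0$; continuity of the clopen indicator then gives $\lambda(A\cap\sigma^{-n_m}A)=0$, equivalently $\lambda(A\cap\sigma^{n_m}A)=0$, for every $m$. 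Thus $(\Omega,\lambda,\sigma)$ together with $A$ is a measure preserving system in which $\lambda(A)>\delta$ yet $\lambda(A\cap\sigma^{n}A)=0$ for all $n\in S$, contradicting the assumption that $S$ is a set of $\delta$-recurrence. The only delicate point is to kill recurrence along the entire infinite set $S$ while simultaneously retaining $\lambda(A)>\delta$; the symbolic encoding resolves this cleanly by keeping the distinguished set clopen. This is precisely what a naive weak-$*$ limit of the spectral (correlation) measures of the $\mathbf 1_{D_k}$ on $\mathbb T$ would fail to supply, since the limiting measure on $\mathbb T$ need not be realizable as the correlation measure of an indicator function.
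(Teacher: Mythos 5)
Your proof is correct. The paper itself omits the proof of this lemma, deferring to the argument for Theorem 2.1 of Forrest's thesis, and your weak-$*$ compactness argument --- encoding each finite witness into the full shift $\{0,1\}^{\mathbb Z}$ so that the distinguished set and its return-time correlations become clopen cylinders, then passing to an invariant limit measure with $\lambda(A)\geq\delta'>\delta$ and $\lambda(A\cap\sigma^{n}A)=0$ for all $n\in S$ --- is exactly the standard mechanism behind that result; the sign issue $\mu(D\cap T^{-n}D)=\mu(D\cap T^{n}D)$ is handled correctly via invertibility.
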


Lemma \ref{lem:UniformRecurrence} is an immediate consequence of Lemma \ref{lem:CayleyCharacterization}. It can also be proved by following the proof of Lemma 6.4 of \cite{ForrestThesis}.

\begin{lemma}\label{lem:ExpandingSelection}
  Let $(S_{n})_{n\in \mathbb N}$ be a sequence of measure expanding subsets of $G$.  Then there is a sequence of finite sets $S_{n}'\subseteq S_{n}$ such that $S':=\bigcup_{n=1}^{\infty} S_{n}'$ is measure expanding.
\end{lemma}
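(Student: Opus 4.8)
The plan is to reduce the measure expansion of $S'$ to a countable list of recurrence requirements and to meet each requirement using a single finite block drawn from one of the $S_n$. Recall that $S'$ is measure expanding precisely when $S'+m$ is a set of recurrence for every $m\in\mathbb Z$, which by the observation preceding Lemma \ref{lem:ForrestUniformRecurrence} happens exactly when $S'+m$ is a set of $\delta$-recurrence for every $\delta>0$. Since a set of $\delta$-recurrence is automatically a set of $\delta'$-recurrence whenever $\delta'\geq\delta$, it suffices to arrange that $S'+m$ is a set of $1/k$-recurrence for every $m\in\mathbb Z$ and every $k\in\mathbb N$. This is a countable family of conditions, so I would fix a bijection $n\mapsto(m_n,k_n)$ from $\mathbb N$ onto $\mathbb Z\times\mathbb N$.

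Next I would extract the finite blocks. Because $S_n$ is measure expanding, $S_n+m_n$ is a set of recurrence; applying Lemma \ref{lem:ForrestUniformRecurrence} with $\delta=1/k_n$ produces a finite set $G_n\subseteq S_n+m_n$ that is a set of $1/k_n$-recurrence. Setting $S_n':=\{g-m_n:g\in G_n\}$ gives a finite subset of $S_n$ with the property that $S_n'+m_n=G_n$ is a set of $1/k_n$-recurrence. Carrying this out for every $n$ defines the required sequence of finite sets, and hence $S':=\bigcup_{n=1}^{\infty} S_n'$.

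Finally I would verify that $S'$ is measure expanding by invoking monotonicity of recurrence under supersets: if $A\subseteq B$ and $A$ is a set of $\delta$-recurrence, then so is $B$, since any $n\in A$ witnessing recurrence also lies in $B$. Given any $m\in\mathbb Z$ and $k\in\mathbb N$, choose the index $n$ with $(m_n,k_n)=(m,k)$; then $S_n'+m\subseteq S'+m$ and $S_n'+m$ is a set of $1/k$-recurrence, so $S'+m$ is a set of $1/k$-recurrence as well. As $m$ and $k$ were arbitrary, $S'+m$ is a set of $\delta$-recurrence for every $\delta>0$ and every $m\in\mathbb Z$, i.e.\ $S'$ is measure expanding.

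The argument is essentially bookkeeping once the two structural inputs are in place, and I expect the only point requiring care to be the reduction in the first paragraph: recognizing that measure expansion is equivalent to the countable list of $1/k$-recurrence conditions, so that Lemma \ref{lem:ForrestUniformRecurrence} can be applied one coordinate at a time and the resulting finite blocks assembled without interference.
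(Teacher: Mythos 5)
Your proof is correct and takes essentially the same approach as the paper: both extract finite sets of $\delta$-recurrence from the $S_n$ via Forrest's compactness result and conclude by monotonicity of $\delta$-recurrence under supersets. The only difference is bookkeeping --- the paper has each $S_n'$ handle all translates $|m|\leq n$ at level $\delta=1/n$ simultaneously, whereas you assign a single pair $(m_n,1/k_n)$ to each index via a bijection; both work equally well.
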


\begin{proof}
Let $(S_n)_{n\in \mathbb N}$ be as in the hypothesis, and enumerate $G$ as $(g_n)_{n\in \mathbb N}$.  For each $n,j\in \mathbb N$ the set $S_{n}+g_{j}$ is a set of recurrence, so apply Lemma \ref{lem:UniformRecurrence} to choose a finite set $S_{n}'\subseteq S_{n}$ so that $S_{n}'+g_{j}$ is a set of $\frac{1}{n}$-recurrence for each $j\leq n$.  Then for each $j\in \mathbb N$ and all  $\delta>0$, the set $S'+g_j:=\bigcup_{n=1}^{\infty} S_{n}'+g_j$ is a set of $\delta$-recurrence.  Thus every translate of $S'$ is a set of recurrence.
\end{proof}

\begin{proof}[Proof of Theorem \ref{thm:Main0}]
  Let $G$ be a reasonably homogeneous countable abelian group, enumerated as $(g_k)_{k\in \mathbb N}$. Let $S\subseteq G$ be measure expanding.  To prove Theorem \ref{thm:Main0}, we construct a sequence of sets $S=S_0\supseteq S_1\supseteq S_2\supseteq S_3\supset \dots$ such that for each $k$, $S_k$ is measure expanding, and $S_k-g_k$ is a set of rigidity for some free weak mixing $G$-system $(X_k,\mu_k,T_k)$.  We then apply Lemma \ref{lem:ExpandingSelection} to find a sequence of finite sets $S_k'\subseteq S_k$ such that $S':=\bigcup_{k\in \mathbb N} S_k'$ is measure expanding.  For each $k$, all but finitely many elements of $S'$ belong to $S_k$, so $S'-g_k$ will be a set of rigidity for $(X_k,\mu_k,T_k)$.  Thus every translate of $S'$ is a set of rigidity for some free weak mixing $G$-system.

  We construct the sequence $S_k$ inductively.  Assume $S_{k-1}$ is defined and is measure expanding.  Then the translate $S_{k-1}-g_k$ is measure expanding as well, so we can apply Proposition \ref{prop:KroneckerMeasures}  to find a measure expanding set $S_k\subseteq S_{k-1}$ and a probability measure $\sigma_k$ on $\widehat{G}$ with full support satisfying $\lim_{n\to\infty} \hat{\sigma}_k(s_n^{(k)}-g_k)=1$, where $(s_n^{(k)})_{n\in \mathbb N}$ enumerates $S_k$.  Since $\sigma_k$ is fully supported,  Lemma \ref{lem:FourierCharacterizeFree}  provides a free weak mixing $G$-system $(X_k,\mu_k,T_k)$ for which $(s_n^{(k)}-g_k)_{n\in \mathbb N}$ is a rigidity sequence.
  \end{proof}

\subsection{Relation to earlier work}

Forrest \cite{Forrest} proved that there is a set of recurrence which is not a set of strong recurrence, in both $\mathbb Z$ and in $\bigoplus_{n=1}^\infty \mathbb Z/2\mathbb Z$; see \cite{McCutcheonAlexandria} for an alternative approach to Forrest's construction.  In \cite[Corollary 2.3]{GrPop} this was improved to show that there is a set $S\subseteq \mathbb Z$ which is measure expanding but no translate of $S$ is a set of strong recurrence.  Corollary \ref{cor:Main} shows that such examples are ubiquitous in every countably infinite abelian group: they can be found within any prescribed measure expanding set.

Fayad and Kanigowski \cite{FaKa} showed that in $\mathbb Z$, there is a rigidity sequence $(s_n)_{n\in \mathbb N}$ for some weak mixing $\mathbb Z$-system which is not a rigidity sequence for any circle rotation.  Theorem 2.1 of \cite{GrPop} improves this to find a sequence $(s_n)_{n\in \mathbb N}$ such that for all $t\in \mathbb Z$, $(s_n+t)_{n\in \mathbb N}$ is a rigidity sequence for some weak mixing system, while $\{s_n+t:n\in \mathbb N\}$ is a set of recurrence for all $t\in \mathbb Z$.  Section 5 of \cite{BadeaGrivauxMatheron} constructs rigidity sequences $(s_n)_{n\in \mathbb N}$ such that $\{s_n: n\in \mathbb N\}$ is dense in the Bohr topology of $\mathbb Z$.  Ackelsberg \cite[Theorem C]{Ackelsberg} generalizes all these prior results on rigidity sequences from $\mathbb Z$ to all countable abelian groups, with the caveat that in some groups, the system for which $(s_n+t)$ is rigid is not  guaranteed to be free.  This leads to \cite[Question 7.6]{Ackelsberg} about Kronecker sets and $K_q$-sets.  While we do not resolve this question, our Theorem \ref{thm:MainFinite} proves Conjecture 1.6 of \cite{Ackelsberg}.  The main result of \cite{CherryKQ} resolves the issue of freeness for countable abelian groups of finite exponent, which answers the ``finite exponent version'' of \cite[Question 7.6]{Ackelsberg}. In the terminology of \cite{Ackelsberg}, our Theorem \ref{thm:MainFinite} says that if $G$ is a countable abelian group, there is a finite index subgroup $H\leq G$ such that if $S\subseteq G$ is measure expanding, then there is a sequence $(r_n)$ of elements of $S$ such that for all $h\in H$, $(r_n-h)$ is a freely rigid-recurrent sequence.  Theorem \ref{thm:Main0} covers those groups where we can take $H=G$.

\subsection{Outline of the article}
Section \ref{sec:LCA} summarizes some standard harmonic analysis.

Section \ref{sec:SaekiMeasures} reviews previous constructions and introduces a special class of measures on compact abelian groups; we call them ``$\mathcal P$-Saeki measures,'' or ``$\mathcal P^{(q)}$-Saeki measures,'' where $\mathcal P$ denotes some distinguished class of subsets of $G$.  These are used to prove Proposition \ref{prop:KroneckerMeasures}  at the end of \S\ref{sec:SaekiMeasures}.   Sections \ref{sec:Uniformity} through \ref{sec:Measures} are dedicated to proving that such measures exist.

Section \ref{sec:Uniformity} introduces Bohr neighborhoods and proves the essential Lemma \ref{lem:Aura}.



Section \ref{sec:UBD} summarizes techniques for transferring recurrence properties between groups. 

Section \ref{sec:BHballs} states a key recurrence property of Hamming balls in powers of cyclic groups.  This is used to prove Lemma \ref{lem:IndependentSets}, which forms the base of our construction in the proof of Proposition \ref{prop:KroneckerMeasures}.

Section \ref{sec:IndependentSets} collects some  facts about finite independent subsets of compact groups.

 Section \ref{sec:Measures} constructs the measures on $\widehat{G}$ introduced in \S\ref{sec:SaekiMeasures}, proving Proposition \ref{prop:KroneckerMeasures}.

Section \ref{sec:arbitrary} contains the proofs of Theorem \ref{thm:MainFinite} and Corollary \ref{cor:Main}.

\subsection{Question}

\begin{question}\label{q:Main} Let $G$ be a countably infinite abelian group.  Is the following statement true?
\begin{center}  \begin{minipage}{0.85\textwidth} For every set of recurrence $S\subseteq G$, there is a set $S'\subseteq S$ such that $S'$ is a set of recurrence but not a set of strong recurrence. \end{minipage}\end{center}
\end{question}
We are unable to resolve this question in any countably infinite abelian group.

\subsection{Acknowledgement.}  An anonymous referee for \emph{Discrete and Continuous Dynamical Systems} contributed several corrections to this article.

\section{Harmonic analysis on LCA groups}\label{sec:LCA}

\subsection{Pontryagin duality}  Let $\Gamma$ be a locally compact abelian group.  A \emph{character} of $\Gamma$ is a continuous homomorphism $\chi:\Gamma\to \mathcal S^{1}$, where $\mathcal S^{1}$ denotes the group of complex numbers of modulus $1$. The characters of $\Gamma$ form a group under pointwise multiplication, denoted $\widehat{\Gamma}$.  The group $\widehat{\Gamma}$ is called the \emph{dual of} $\Gamma$ (sometimes ``Pontryagin dual''), and is endowed with the topology of uniform convergence on compact subsets.  This means that when $\Gamma$ is discrete, $\widehat{\Gamma}$ has the topology of pointwise convergence, and when $\Gamma$ is compact, $\widehat{\Gamma}$ has the topology of uniform convergence.

\begin{proposition}\label{prop:Pduality}
  Let $\Gamma$ be a locally compact abelian group.

\begin{enumerate}
  \item   $\widehat{\Gamma}$ is a locally compact abelian group -- \cite[Theorem 4.2]{Folland}.

 \item $\widehat{\widehat{\Gamma}}$ is topologically isomorphic to $\Gamma$: the map $\phi: \Gamma\to \widehat{\widehat{\Gamma}}$ given by $\phi(g)=e_{g}$, where $e_{g}(\chi)=\chi(g)$, is both a homeomorphism of topological spaces and an isomorphism of groups -- \cite[Theorem 4.31]{Folland}.

 \item $\widehat{\Gamma}$ is discrete if and only if $\Gamma$ is compact, and vice versa -- \cite[Proposition 4.4]{Folland}.

 \end{enumerate}

\end{proposition}

In general, if $G$ is a countably infinite discrete abelian group, then $\widehat{G}$ is an uncountable compact metrizable abelian group. See \cite{Rudin}, \cite{MorrisLCA}, or \cite{Folland} for further exposition on Pontryagin duality.

\subsection{Notation}  We will write $e(t)$ for $e^{2\pi i t}$.  For $k\in \mathbb N$, let $Z_{k}$ denote the group of $k^{\text{th}}$ roots of unity, so $Z_{k}$ is the cyclic subgroup of $\mathcal S^{1}$ generated by $e(1/k)$.

\subsection{Direct sums and products of cyclic groups}\label{sec:DirectSumsProducts}

If $G$ is a finite cyclic group $\mathbb Z/k \mathbb Z$, then the dual of $G$ is isomorphic to $G$: $\widehat{G}$ is generated by the homomorphism $e_{1}$, where $e_{1}(n+k\mathbb Z)=e(n/k)$.

The following lemma is an immediate consequence of Theorem 2.2.3 of \cite{Rudin}.

\begin{lemma}\label{lem:Products}
   Let $\{G_{j}:j\in I\}$ be a collection of discrete groups and let $G:=\bigoplus_{j\in I} G_{j}$ be their direct sum, also endowed with the discrete topology. Then $\widehat{G}$ is isomorphic to $\prod_{j\in I} \widehat{G}_{j}$, with the product topology.  Furthermore, if $g=(g_{j})_{j\in I}\in G$ and $\chi=(\chi_{j})_{j\in I}\in \widehat{G}$, evaluation is given by $\chi(g)=\prod_{j\in I} \chi_{j}(g_{j})$.
\end{lemma}
Note: the evaluation is well defined as $g_{j}=0$ for all but finitely many $j\in I$.

For $q\in \mathbb N$, let $\Sigma_{q}:=\bigoplus_{n=1}^{\infty} \mathbb Z/q\mathbb Z$, as in Definition \ref{def:Sigmaq}. Let $D_{q}:=\prod_{n=1}^{\infty} \mathbb Z/q\mathbb Z$, with the product topology. Then Lemma \ref{lem:Products} says that $\widehat{\Sigma}_{q}=D_{q}$.  Writing an element of $\Sigma_{q}$ as $a=(a_{n})_{n\in \mathbb N}$ and an element of $D_{q}$ as $\chi=(d_{n})_{n\in \mathbb N}$, evaluation is given by
$\chi(a) = e\bigl(\sum_{n\in \mathbb N}a_{n}d_{n}/q\bigr)$.

\subsection{Kronecker sets and \texorpdfstring{$K_q$}{Kq}-sets}
If $G$ is an LCA group with dual $\widehat{G}$, we say that $K\subseteq \widehat{G}$ is a \emph{Kronecker set} if for every continuous $f:K\to \mathcal S^1$ and all $\varepsilon>0$, there is a $g \in G$ such that $|f(\chi)-\chi(g)|<\varepsilon$ for all $\chi\in K$.  For a fixed $q\in \mathbb N$, we say that $K$ is a \emph{$K_q$-set} if this only holds for all continuous functions $f:K\to Z_q$.  See Chapter 5 of \cite{Rudin} for exposition.

\begin{remark}
	This definition differs slightly from the one in \cite{Rudin}, where a Kronecker set is defined to be a subset of $G$.  The two definitions are equivalent by Pontryagin duality.  
\end{remark}

\section{Kronecker-like sets and measures}\label{sec:SaekiMeasures}

After outlining some previous constructions in the direction of Theorem \ref{thm:Main0}, we introduce a special class of measures in Definition \ref{def:SaekiMeasure}.  Lemma \ref{lem:R4dot} asserts that these measures exist, and will be proved in \S\ref{sec:Measures}.  We prove Proposition \ref{prop:KroneckerMeasures} at the end of this section.

\subsection{Prior constructions} \cite{GrPop} and \cite{Ackelsberg} use the following strategy to construct measure expanding sets which are also sets of rigidity.  For a countable abelian group $G$, let $K\subseteq \widehat{G}$ be a Kronecker set (if $G$ has infinite exponent) or a $K_q$-set (if $G$ has finite exponent) and let $\sigma$ be a continuous Borel probability measure supported on $K$.  Then for every continuous $f:\widehat{G}\to \mathcal S^1$ (or $f:\widehat{G}\to Z_q$, if $G$ has finite exponent) and all $\varepsilon>0$, the set
\[
S_{f,\varepsilon}:=\{g\in G: \textstyle\int |f(\chi)-\chi(g)|\, d\sigma(\chi)<\varepsilon\}
\]
is measure expanding (\cite[Proposition 5.2]{GrPop} for $G=\mathbb Z$, \cite[Corollary 6.3]{Ackelsberg} in general).  Together with compactness properties of sets of recurrence, this can be used to construct $(s_n)_{n\in \mathbb N}$ such that $\{s_n:n\in \mathbb N\}$ is measure expanding while $\lim_{n\to\infty} \int |1- \chi(s_{n})|\, d\sigma(\chi)=0$.  The latter condition is equivalent to $\lim_{n\to\infty} \hat{\sigma}(s_n)=1$, since $\sigma$ is a probability measure.

One would like to repeat this argument to prove Proposition \ref{prop:KroneckerMeasures}, by selecting the $s_n$ from the ambient measure expanding set $S$ in the hypothesis.  This would require that $S_{f,\varepsilon}\cap S$ is measure expanding, and that is often not the case: when $\sigma$ is continuous, the complement $G\setminus S_{f,\varepsilon}$ will have Banach density $1$, and then it is easy to verify that $S:=G\setminus S_{f,\varepsilon}$ itself is measure expanding, while $S_{f,\varepsilon}\cap S$ is empty.  But all we really need is that, given a measure expanding set $S$, there exists at least one continuous measure $\sigma$ with the property that for every $f:\widehat{G}\to \mathcal S^1$ and all $\varepsilon>0$, $S_{f,\varepsilon}\cap S$ is measure expanding.  Lemma \ref{lem:R4dot} says that such measures exist.

\begin{definition}\label{def:SaekiMeasure}
  If $G$ is a countable abelian group and $\mathcal P$ is a collection of subsets of $G$, we say that a positive Borel measure $\sigma$ on $\widehat{G}$ is a $\mathcal P$-\emph{Saeki measure} if for all $\varepsilon>0$ and every continuous $f:\widehat{G} \to \mathcal S^{1}$,
  \begin{equation}\label{eqn:SaekiMeasureDef}
\{g\in G: \textstyle\int|f(\chi)-\chi(g)| \,d\sigma(\chi)<\varepsilon \}\in \mathcal P.
  \end{equation}
   If $q\in \mathbb N$, we say that $\sigma$ is a $\mathcal P^{(q)}$-Saeki measure if for all $\varepsilon>0$ and every measurable $f:\widehat G \to Z_{q}$, the inclusion (\ref{eqn:SaekiMeasureDef}) is satisfied.
\end{definition}

\begin{remark}
  In the above terminology, Saeki \cite{Saeki} proved that if $G$ is a countably infinite abelian group, $\mathcal B$ is the collection of subsets of $G$ dense in the Bohr topology, and $K\subseteq \widehat{G}$ is a Kronecker set (resp.~$K_q$-set),  then every continuous measure supported on $K$ is a $\mathcal B$-Saeki measure (resp.~$\mathcal B^{(q)}$-Saeki measure).  For $G=\mathbb Z$, \cite[Proposition 5.2]{GrPop} improves the this result by replacing  $\mathcal B$ with the class of measure expanding subsets of $\mathbb Z$. This improvement was generalized to all countable abelian groups by \cite[Corollary 6.3]{Ackelsberg}.
\end{remark}

\begin{remark}
  The only function $f$ we use in the proof of Proposition \ref{prop:KroneckerMeasures} is the constant $f\equiv 1$, so we could make a weaker definition of ``$\mathcal P$-Dirichlet'' measure, insisting only that (\ref{eqn:SaekiMeasureDef}) holds for $f\equiv 1$.  This doesn't seem to save any effort in the proof, so we use the stronger definition.
\end{remark}

Given a measure expanding set $S\subseteq G$, we let  $\mathcal R^{\bullet}S$ denote the collection of subsets of $S$ which are measure expanding.

\begin{lemma}\label{lem:R4dot}  Let $G$ be a reasonably homogenous countable abelian group.  Suppose  $S\subseteq G$ is measure expanding and $W\subseteq \widehat{G}$ is open and nonempty.
   \begin{enumerate}
      \item[1.] If $G$ has infinite exponent, then there is a continuous $\mathcal R^{\bullet}S$-Saeki probability measure supported on $W$.
      \item[2.]  If $r, q_{i} \in \mathbb N$, $q_{i}\geq 2$, $G=\Sigma_{q_1}\oplus \cdots \oplus \Sigma_{q_r}$ and $q$ is the exponent of $G$, there is a continuous $\mathcal R^{\bullet}S^{(q)}$-Saeki probability measure supported on $W$.
   \end{enumerate}
\end{lemma}

Lemma \ref{lem:R4dot} is proved in \S\ref{sec:Construction}.

\begin{corollary}\label{cor:OneMeasure}
Let $G$ be a reasonably homogeneous countable abelian group. Suppose  $S\subseteq G$ is measure expanding and $W\subseteq \widehat{G}$ is open and nonempty. Then there is a continuous Borel probability measure $\sigma$ supported on $W$ and a sequence $(s_n)$ of elements of $S$ such that $\lim_{n\to\infty} \hat{\sigma}(s_n)=1$, and $S':=\{s_n:n\in \mathbb N\}$ is measure expanding.
\end{corollary}

\begin{proof}
Let $W\subseteq \widehat{G}$ be open.  By Lemma \ref{lem:R4dot}, we let $\sigma$ be a continuous $\mathcal R^{\bullet}S$-Saeki probability measure, or a continuous $\mathcal R^{\bullet}S^{(q)}$-Saeki probability measure, supported on $W$.   Let $S_n:= \{s\in S: \int |1-\chi(s)|\, d\sigma(\chi)<\tfrac{1}{n}\}$. Each $S_n$ is measure expanding, as seen by setting $f \equiv 1$ in (\ref{eqn:SaekiMeasureDef}).  Lemma \ref{lem:ExpandingSelection} provides finite sets $S_n'\subseteq S_n$ such that $S':=\bigcup_{n\in \mathbb N} S_n'$ is measure expanding.  Enumerating $S'$ as $(s_n)_{n\in \mathbb N}$ in any order, we see that $\lim_{n\to\infty} \int |1-\chi(s_n)|\, d\sigma(\chi)=0$.  Since $\sigma(\widehat{G})=1$, this is equivalent to $\lim_{n\to\infty}\hat{\sigma}(s_n)=1$.
\end{proof}

\begin{proof}[Proof of Proposition \ref{prop:KroneckerMeasures}]
Assuming $G$ is a reasonably homogeneous countable abelian group, the countability of $G$ implies $\widehat{G}$ is metrizable.  Being compact as well, $\widehat{G}$ has a countable base $\{W_{n}:n\in \mathbb N\}$ for its topology.

We will construct a sequence of sets $S\supseteq S_1\supset S_2 \supset S_3\supset \dots $ and continuous measures $\sigma_k$ supported on $W_k$ such that each $S_k$ is measure expanding, while enumerating $S_k$ as $(s^{(k)}_n)_{n\in\mathbb N}$ we have \begin{equation}\label{eqn:SigmakRigid}\lim_{n\to\infty} \hat{\sigma}_k(s^{(k)}_{n})=1.
\end{equation}  With these $\sigma_{k}$ in hand, we let $\sigma = \sum_{k\in \mathbb N} 2^{-k}\sigma_{k}$.  Then $\sigma$ will assign positive measure to each nonempty open set, since $\sigma_k(W_{k})>0$ for every $k$, and every nonempty open set contains at least one of the $W_k$.  Continuity of $\sigma$ is immediate from continuity of each $\sigma_{k}$. Thus $\sigma$ satisfies part (iii) of Proposition \ref{prop:KroneckerMeasures}.

To construct $S_1$ and $\sigma_1$, apply Corollary \ref{cor:OneMeasure} to $S$.  For $k>1$, assume $S_{k-1}$ and $\sigma_{k-1}$ are defined, and apply Corollary \ref{cor:OneMeasure} with $S_{k-1}$ in place of $S$.  Having defined $S_k$ and $\sigma_k$, we define $S'$ by applying Lemma \ref{lem:ExpandingSelection} to find a sequence of finite sets $S_k'\subseteq S_k$ such that $S':=\bigcup_{k\in \mathbb N} S_k'$ is measure expanding. Part (i) of Proposition \ref{prop:KroneckerMeasures} is verified.

To verify part (ii) of Proposition \ref{prop:KroneckerMeasures}, enumerate $S'$ as $(s_n)_{n\in \mathbb N}$.  Then (\ref{eqn:SigmakRigid}) implies $\lim_{n\to\infty} \hat{\sigma}_k(s_n)$ for every $k$, as all but finitely many elements of $S'$ belong to $S_k$.   From this and the definition of $\sigma$ we conclude that $\lim_{n\to\infty} \hat{\sigma}(s_n)=1$.
\end{proof}

\section{Bohr topology and properties of \texorpdfstring{$R_{0}(T,D)$}{R0TD}}\label{sec:Uniformity}
For this section, fix a countable abelian group $G$.  Given a $G$-system $(X,\mu,T)$, a set $D\subseteq X$ with $\mu(D)>0$ and a constant $c\geq 0$, we consider the set
\[R_{c}(T;D):=\{g\in G: \mu(D\cap T^{g}D)>c\}.\]
Note that $S\subseteq G$ is a set of recurrence if and only if for every $G$ system $(X,\mu,T)$ and every $D$ with $\mu(D)>0$, $S\cap R_{0}(T;D)\neq \varnothing$.

\subsection{Bohr topology and group rotations}\label{sec:Bohr} Let $\mathbb T$ denote $\mathbb R/\mathbb Z$.  Representing  $t\in \mathbb T$ by $\tilde{t}\in [0,1)$, let $\|t\|:=\min\{|\tilde t-n|:n\in \mathbb Z\}$.   

Let $G$ be a countable discrete abelian group.  If $d\in \mathbb N$, $\eta>0$, and $C=\{\rho_1,\dots,\rho_{d}\}$ is a set of homomorphsims from $G$ to $\mathbb T$, the corresponding \emph{Bohr-$(d,\eta)$ set} is
\begin{equation}\label{eqn:BohrDef}
\Bohr(C,\eta):=\{g\in G:\max_{j\leq d} \|\rho_{j}(g)\|<\eta\}.
  \end{equation}For $h\in G$, the corresponding \emph{Bohr neighborhood of $h$} is $h+\Bohr(C,\eta)$. 

  Since $\mathbb T$ is isomorphic to $\mathcal S^1$ as a topological group, we can also define Bohr neighborhoods in terms of characters: given $\chi_1,\dots,\chi_d\in \widehat{G}$ and $\varepsilon>0$,
  \[\{g\in G: \max_{j\leq d} |\chi_j(g)-1|<\varepsilon\}\] is a Bohr neighborhood of $0$ in $G$.

A \emph{group rotation $G$-system} $(Z,m,T_{\rho})$ is a $G$-system where $Z$ is a compact abelian group, $m$ is Haar probability measure on $Z$, and $\rho:G\to Z$ is a homomorphism.  The action $T_{\rho}$ is defined by $T_{\rho}^{g}z = z+\rho(g)$.

\begin{observation}\label{obs:BohrIsReturn}
   Given a set $C$ of homomorphisms $\rho_1,\dots,\rho_d:G\to \mathbb T$ and  $\eta>0$, $\Bohr(C,\eta)$ contains a set of the form $R_{0}(T;D)$ for a $G$-system $(X,\mu,T)$ and some $D\subseteq X$ with $\mu(D)>0$.  In fact, $\Bohr(C,\eta)=R_{0}(T;D)$, where  $(X,\mu,T)$ is the group rotation $G$-system $(Z,m,T_{\rho})$, $Z=\mathbb T^{d}$, $\rho=(\rho_{1},\dots,\rho_{d})$, $m$ is Haar probability measure on $\mathbb T^{d}$, and $D=\{(x_1,\dots,x_d)\in \mathbb T^{d}: \max_{j\leq d} \|x_j\|<\eta/2\}$.  
\end{observation}

\subsection{Properties of \texorpdfstring{$R_{0}(T;D)$}{R0TD}}\label{sec:R0}

A key identity for $G$-systems is
\begin{equation}\label{eqn:MPIdentity}
  \mu(D\cap T^{g}E)=\mu(T^{h}D\cap T^{g+h}E) \qquad \text{for all $g,h\in G$ and all $D, E\subseteq X$.}
\end{equation}
  Equation (\ref{eqn:MPIdentity}) follows from the identity $T^{-h}(D\cap E)=T^{-h}D\cap T^{-h}E$ and the fact that $T$ preserves $\mu$.

The following lemma is essential in the proof of Lemma \ref{lem:IndependentSets},  which forms the core of our main argument.

\begin{lemma}\label{lem:Aura}
Let $\delta>0$.  If  $S\subseteq G$ measure expanding, $E\subseteq G$ is a set of $\delta$-recurrence, and $(X,\mu,T)$ is a $G$-system with $D\subseteq X$ having $\mu(D)>0$, then $S\cap (E+R_{0}(T;D))$ is a set of $\delta$-recurrence.  In particular, when $B$ is a Bohr neighborhood of $0$, $S\cap (E+B)$ is a set of $\delta$-recurrence.
\end{lemma}

\begin{proof}
  Let $E$ and $S$ be as in the hypothesis.  Let $(Y,\nu,Q)$ be a $G$-system and let $C\subseteq Y$ have $\nu(C)>\delta$.   To prove the lemma it suffices to find $e\in E$ and $g\in R_{0}(T;D)$ such that $e+g\in S$ and $\nu(C\cap Q^{e+g}C)>0$.

  Since $E$ is a set of $\delta$-recurrence, choose $e\in E$ such that $C_{e}:=C\cap Q^{e}C$ has $\nu(C_{e})>0$.  Consider the product system $(X\times Y,\mu\times \nu, T\times Q)$.  Since $S$ is measure expanding, the translate $S-e$ is a set of recurrence, so there is a $g\in S-e$ such that
  \[
  \mu\times \nu\bigl((D\times C_{e})\cap (T\times Q)^{g}(D\times C_{e})\bigr)>0.
  \]   The above inequality implies  $g\in R_{0}(T,D)$ and that
  \[
  \nu(C\cap Q^{e}C\cap Q^{g}C\cap Q^{e+g}C)>0,
  \] which in turn implies that $\nu(C\cap Q^{e+g}C)>0$, as desired.  Since $g\in S-e$ we have $e+g\in S$.

  The last assertion of the lemma now follows from Observation \ref{obs:BohrIsReturn}.
\end{proof}

\section{Upper Banach density and Cayley graphs}\label{sec:UBD}

The language of Cayley graphs is useful for transferring recurrence properties between groups.  The main result here is Lemma \ref{lem:CopyCayley}, which will be used to prove Lemma \ref{lem:Prelim}.

\subsection{Upper Banach density} Let $G$ be a countable abelian group.  A sequence $(\Phi_{n})_{n\in \mathbb N}$ of finite subsets of $G$ is a \emph{F{\o}lner sequence} if $\lim_{n\to \infty}\frac{|\Phi_{n}\triangle (\Phi_{n}+g)|}{|\Phi_{n}|}=0$ for every $g\in G$.

Every countable abelian group has a F{\o}lner sequence: by \cite{FolnerBanach}, the existence of a F{\o}lner sequence for a countable group $G$ is equivalent to the amenability of $G$, and every abelian group is amenable.

If $\mathbf \Phi=(\Phi_{n})_{n\in \mathbb N}$ is a F{\o}lner sequence for $G$ and $A\subseteq G$, the \emph{upper density of $A$ with respect to $\mathbf{\Phi}$} is $\bar{d}_{\mathbf{\Phi}}(A):=\limsup_{n\to \infty} \frac{|A\cap \Phi_{n}|}{|\Phi_{n}|}$.  The \emph{upper Banach density} of $A$ is \[d^*(A):=\sup\{\bar{d}_{\mathbf{\Phi}}(A): \mathbf{\Phi} \text{ is a F{\o}lner sequence}\}.\]
We need the following characterization of upper Banach density; cf.~\cite[Lemma 3.3]{BBF}.

\begin{lemma}\label{lem:UBD}
Let $A\subseteq G$.  For all finite $F\subseteq G$ there is a $g\in G$ such that $|A\cap (F+g)|\geq d^*(A)|F|$.
  \end{lemma}
  Actually \cite[Lemma 3.3]{BBF} proves only that if $\beta<d^*(A)$, then there is a $g$ such that $|A\cap (F+g)|\geq \beta|F|$.  But since $|A\cap (F+g)|$ is integer-valued, the supremum is attained.

The following version of Furstenberg's correspondence principle will help demonstrate that being a set of recurrence is approximated by finite subsets.

\begin{lemma}\label{lem:Correspondence} Let $(\Phi_n)_{n\in\mathbb N}$ be a F{\o}lner sequence and let $(A_n)_{n\in \mathbb N}$ be a sequence of subsets of $G$ such that $\lim_{n\to\infty} \frac{|A_n\cap \Phi_n|}{|\Phi_n|}=\delta$.  Then there is a $G$-system $(X,\mu,T)$ and $D\subseteq X$ with $\mu(D)\geq \delta$ such that for all $h\in G$,
\begin{equation}\label{eqn:Correspondence}
\limsup_{n\to\infty} \frac{|A_n\cap (A_n+h)\cap \Phi_n|}{|\Phi_n|} \geq \mu(D\cap T^{h}D).
\end{equation}
\end{lemma}

\begin{proof}
Assume $\Phi_{n}$ and $A_{n}$ are as in the hypothesis.  Let $X = \{0,1\}^G$ with the product topology, so that $X$ is compact.  Write elements of $X$ as functions $x:G\to \{0,1\}$, and let $T:X\to X$ be the shift action: $(T^gx)(h)=x(h+g)$.  Let $D=\{x\in X:x(0)=1\}$, so that $D$ is clopen. Let $y_n=1_{A_n}\in X$. Define a sequence of Borel probability measures $\mu_n$ on $X$ by $\mu_n(E) = \frac{1}{|\Phi_n|}|\{g\in \Phi_n: T^gy_n\in E\}|$, so that $\mu_n(X)=1$ for every $n$, and 
\begin{equation}\label{eqn:mun}
	\mu_n(D)=|\{g\in \Phi_n: 1_{A_n}(g)=1\}|/|\Phi_n|=|A_n\cap \Phi_n|/|\Phi_n|.
\end{equation}  Let $\mu$ be any weak$^*$ limit of $(\mu_n)_{n\in \mathbb N}$, so that $\mu$ is again a probability measure. By (\ref{eqn:mun}) have $\mu(D)=\lim_{n\to\infty}\mu_{n}(D)\geq \delta$. It is easy to check that $\mu$ is $T$-invariant, as $|\int f\, d\mu_n - \int f\circ T^h\, d\mu_n| \leq 2\sup |f| |\Phi_n \triangle (\Phi_n+h)|/|\Phi_n|$ for every $n$.

To prove (\ref{eqn:Correspondence}), note that $T^gy_n \in D\cap T^{h}D$ if and only if $y_n(g)=1$ and $y_n(g-h)=1$, and this is equivalent to $g\in A_n$ and $g-h\in A_n$, which means $g\in A_n \cap (A_n+h)$. Thus \[\mu(D\cap T^{h}D)=\lim_{k\to\infty} \frac{|A_{n_k}\cap (A_{n_k}+h)\cap \Phi_{n_k}|}{|\Phi_{n_k}|}, \]
where $n_k$ is any sequence with $\mu_{n_k}(D\cap T^hD)$ converging to $\mu(D\cap T^hD)$.
\end{proof}

Recall the term ``set of $\delta$-recurrence'' from Definition \ref{def:deltaRecurrence}.

\begin{corollary}\label{cor:FolnerRecurrence}
  If $S$ is a set of $\delta$-recurrence, then for all $\delta'>\delta$ and every F{\o}lner sequence $(\Phi_n)_{n\in \mathbb N}$, there is an $n$ such that $(A-A)\cap S\neq \varnothing$ whenever $|A\cap \Phi_n|\geq \delta'|\Phi_n|$.
\end{corollary}
\begin{proof}
  Assume, to get a contradiction, that $S$ is a set of $\delta$-recurrence, $\delta'>\delta$, $(\Phi_n)$ is a F{\o}lner sequence, and for all $n\in \mathbb N$, there is an $A_n\subseteq \Phi_n$ with $|A_n\cap \Phi_n|\geq \delta'|\Phi_n|$ such that $(A_n-A_n)\cap S=\varnothing$.  Then $A_n\cap (A_n+h)=\varnothing$ for all $h\in S$, and $\lim_{n\to\infty} \frac{|A_n\cap (A_n+h)\cap \Phi_n|}{|\Phi_n|}=0$.  Lemma \ref{lem:Correspondence} then provides $G$-system $(X,\mu,T)$ and $D\subseteq X$ with $\mu(D)\geq \delta'$ such that $\mu(D\cap T^{h}D)=0$ for all $h\in S$.  This contradicts the assumption that $S$ is a set of $\delta$-recurrence.
\end{proof}

\begin{lemma}\label{lem:DeltaDensityRecurrence}
  Let $G$ be a countable abelian group and $S\subseteq G$.  Then $S\subseteq G$ is a set of $\delta$-recurrence if and only if $S\cap (A-A)\neq \varnothing$ for all $A\subseteq G$ having $d^{*}(A)>\delta$.
\end{lemma}

\begin{proof}  The implication ``If $S$ is a set of $\delta$-recurrence then $S\cap (A-A)\neq \varnothing$ for all $A\subseteq G$ having $d^*(A)>\delta$'' follows from Corollary \ref{cor:FolnerRecurrence}. To prove the reverse implication, let $S$ be such that $S\cap (A-A)\neq \varnothing$ for all $A\subseteq G$ having $d^*(A)>  \delta$.  To prove that $S$ is a set of $\delta$-recurrence, let $(X,\mu,T)$ be a measure preserving $G$-system, let $D\subseteq X$ have $\mu(D)>\delta$, and let $(\Phi_n)_{n\in \mathbb N}$ be a F{\o}lner sequence.

Let $Q_{0}:=\{g\in G: \mu(D\cap T^{g}D)=0\}$. Let $C:=\bigcup_{g\in Q_{0}} D\cap T^{g}D$ and $D':=D\setminus C$.  Note that $D'\subseteq D$, $\mu(D')=\mu(D)$, and if $D'\cap T^gD'\neq \varnothing$, then $\mu(D\cap T^gD)>0$. So there is no loss of generality in replacing $D$ with $D'$.

Let $f=1_D$.  By Fatou's lemma,
\[
\limsup_{n\to\infty} \int\frac{1}{|\Phi_n|}\sum_{g\in \Phi_n} 1_D\circ T^g\, d\mu \leq \int \limsup_{n\to\infty} \frac{1}{|\Phi_n|}\sum_{g\in \Phi_n} 1_D\circ T^g d\mu.
\]
Since $T$ preserves $\mu$, the left-hand side above equals $\mu(D)$. Then the integral on the right-hand side is at least $\mu(D)$,  $\limsup_{n\to\infty} \frac{1}{|\Phi_n|}\sum_{g\in \Phi_n} 1_D(T^gx)\geq \mu(D)$ for some $x\in X$.  Fixing this $x$ and letting $A:=\{g\in G: T^gx\in D\}$, we have $\frac{1}{|\Phi_n|}\sum_{g\in \Phi_n} 1_D(T^gx)=\frac{|A\cap \Phi_n|}{|\Phi_n|}$.  Thus $\bar{d}_{\Phi}(A)\geq \mu(D)$, so $d^*(A)\geq \mu(D)$. Our assumption on $S$ implies $(A-A)\cap S\neq \varnothing$, so we can write $a_2-a_1=s$ for some $a_i\in A$, $s\in S$. From this we will deduce that $D\cap T^{s}D\neq \varnothing$. To do so, note that we have $a_2\in A$ and $a_2-s\in A$. This means $T^{a_2-s}x\in D$ and $T^{a_2}x\in D$, so $T^{a_2}x\in D\cap T^{s}D$, meaning $D\cap T^{s}D\neq \varnothing$.  By our replacement of $D$ with $D'$ above, we have $\mu(D\cap T^{s}D)>0$.
\end{proof}

\subsection{Cayley graphs}\label{sec:CayleyGraphs}

If $\mathcal G = (V,\mathcal E)$ is a graph with vertex set $V$ and edge set $\mathcal E$, we say $A\subseteq V$ is \emph{independent in $\mathcal G$} if no two elements of $A$ are joined by an edge of $\mathcal G$. When $V$ is finite, the \emph{independence ratio} of $\mathcal G$ is $\frac{1}{|V|}\max\{|A|:A\subseteq V \text{ is independent in } \mathcal G\}$.  If $V'\subseteq V$, the \emph{subgraph of $\mathcal G$ induced by} $V'$ has vertex set $V'$ and edge set $\{\{v,w\}: \{v,w\}\in E, v,w\in V'\}$.

Given a group $G$ and $S\subseteq G$, we let $\Cay(S)$ denote the \emph{Cayley graph} determined by $S$, which has vertex set $G$ and edges $\{g, g'\}$, where $g'-g\in \pm S$.

\begin{lemma}\label{lem:CayleyCharacterization}
Let $G$ be a countable abelian group and $S\subseteq G$.
\begin{enumerate}
    \item[(i)] If $\Cay(S)$ has a finite subgraph with independence ratio $\leq \delta$, then $S$ is a set of $\delta$-recurrence.	

    \item[(ii)] If $S$ is a set of $\delta$-recurrence, then for all $\delta'>\delta$, $\Cay(S)$ has a finite subgraph with independence ratio $\leq \delta'$.
\end{enumerate}
\end{lemma}
Lemma \ref{lem:UniformRecurrence} follows from Lemma \ref{lem:CayleyCharacterization}: a finite subgraph of $\Cay(S)$ is a subgraph of $\Cay(S')$ for some finite $S'\subseteq S$.  Then if $\Cay(S')$ has independence ratio $\leq \delta'$, $S'$ is a set of $\delta'$-recurrence.
\begin{proof}
  (i) Suppose $\Cay(S)$ has a finite subgraph with vertex set $V=\{g_1,\dots, g_k\}$ having independence ratio $\leq \delta$.  We will use Lemma \ref{lem:DeltaDensityRecurrence} to prove $S$ is a set of $\delta$-recurrence.
  If $A\subseteq G$ has $d^*(A)>\delta$, then by Lemma \ref{lem:UBD} there is a translate $V+t$ of $V$ such that $|A\cap (V+t)|> \delta |V|$.  Thus $|(A-t)\cap V|> \delta|V|$, so $A-t$ cannot be independent in $\Cay(S)$, and there are two elements $a-t,a'-t\in A-t$ such that $(a-t)-(a'-t)\in S$.  This means $a,a'\in A$ and $a-a'\in S$.  Since $A$ was an arbitrary subset of $G$ having $d^*(A)>\delta$, Lemma \ref{lem:DeltaDensityRecurrence} implies $S$ is a set of $\delta$-recurrence.

  (ii) Now suppose $S\subseteq G$ is a set of $\delta$-recurrence and $\delta'>\delta$. Let $(\Phi_n)_{n\in \mathbb N}$ be a F{\o}lner sequence.  By Corollary \ref{cor:FolnerRecurrence}, there is an $n$ such that for every $A\subseteq \Phi_n$ having $|A\cap \Phi_n|\geq \delta' |\Phi_n|$, we have $(A-A)\cap S\neq \varnothing$. This means the finite subgraph of $\Cay(S)$ induced by $\Phi_n$ has independence ratio $\leq \delta'$.
\end{proof}

\begin{lemma}\label{lem:CopyCayley} Let $G$ be an abelian group, $K$ a topological abelian group, and let $\rho:G\to K$ be a homomorphism.   If $U\subseteq K$ is open, then every finite subgraph of $\Cay(U)$ with vertices in $\overline{\rho(G)}$ has an isomorphic copy in $\Cay(\rho^{-1}(U))$.

Consequently, if $\Cay(U)$ has a finite subgraph with vertices in $\overline{\rho(G)}$ and independence ratio $\delta$, then $\Cay(\rho^{-1}(U))$ has a finite subgraph with independence ratio $\delta$, and $\rho^{-1}(U)$ is a set of $\delta$-recurrence.
\end{lemma}

\begin{proof}
  To prove the first statement of the lemma it suffices to prove that if $V$ is a finite subset of $\overline{\rho(G)}$, then there exist $\{g_v:v\in V\}\subseteq G$ such that for each $v, v'\in V$, we have
  \begin{align}
  	\label{eqn:distinct} v\neq v' &\implies g_{v}\neq g_{v'};\\
  	\label{eqn:vv'}
  v-v'\in U &\implies g_v-g_{v'}\in \rho^{-1}(U).
    \end{align} So let $V$ be a finite subset of $\overline{\rho(G)}$.  Let $S:=(V-V)\cap U$, and let $W$ be a neighborhood of $0$ in $K$ so that $S+W\subseteq U$.  Choose a neighborhood $W'$ of $0$ so that $W'-W'\subseteq W$.  Also choose $W'$ sufficiently small that the translates $v+W', v\in V$ are mutually disjoint.  For each $v\in V$, choose $g_v\in G$ so that $\rho(g_v)\in v+W'$; this is possible since $v\in \overline{\rho(G)}$.  Disjointness of the $v+W'$ implies (\ref{eqn:distinct}). To prove (\ref{eqn:vv'}), assume $v-v'\in U$.  Then
  \[\rho(g_v)-\rho(g_{v'})\in v+W' -(v'+W') = (v-v')+(W'-W')\subseteq v-v' + W \subseteq U,\]
  so $g_v-g_{v'}\in \rho^{-1}(U)$.  This proves the first assertion of the lemma.  Since independence ratio is invariant under isomorphism of graphs, we get that $\Cay(\rho^{-1}(U))$ has a finite subgraph with independence ratio less than $\delta$.  Lemma \ref{lem:CayleyCharacterization} then implies $\rho^{-1}(U)$ is a set of $\delta$-recurrence.
\end{proof}

\section{Approximate Hamming balls}\label{sec:BHballs}
\subsection{Independent sets}
\begin{definition}\label{def:Qindependent}
If $\Gamma$ is an abelian group, a set $S\subseteq \Gamma$ is \emph{independent} if the only solutions to the equation $n_1s_1+n_2s_2+\cdots+n_ks_k=0$ with all $s_{i}\in S$, $n_{i}\in \mathbb Z$ also satisfy $n_1s_1=n_2s_2=\cdots =n_ks_k=0$.
\end{definition}
Specializing to $\widehat{G}$ (with multiplicative notation), a finite set $\{\chi_{1},\dots,\chi_{k}\}\subseteq \widehat{G}$ is independent if the only $n_{1},\dots,n_{k}\in \mathbb Z$ satisfying $\chi_{1}(n_{1}g)\chi_{2}(n_{2}g)\cdots \chi_{k}(n_{k}g)=1$ for all $g\in G$ also satisfy $\chi_{1}(n_{1}g)=\chi_{2}(n_{2}g)=\cdots =\chi_{k}(n_{k}g)=1$ for all $g\in G$.

We continue to write $\mathcal S^1$ for the unit circle $\{z\in \mathbb C:|z|=1\}$ with the usual topology, and $Z_q$ for the $q^{\text{th}}$ roots of unity. We sometimes write $Z_{\infty}$ for $\mathcal S^1$, and will write $\mathcal S^d$ and $Z_q^d$ for the cartesian powers $(\mathcal S^1)^d$ and $(Z_q)^d$, respectively.  As before, $\widehat{G}_q$ will denote those $\chi\in \widehat{G}$ of order $q$.

\begin{lemma}[Kronecker's Lemma]\label{lem:Kronecker}
  Let $G$ be a countable abelian group, $q\in \mathbb N\cup\{\infty\}$, $r\in \mathbb N$, and let $\{\chi_{1},\dots,\chi_{r}\}\subseteq \widehat{G}_q$ be an independent collection of characters.  Let $\rho:G\to \mathcal S^r$ be given by $\rho(g)=(\chi_1(g),\dots,\chi_r(g))$.

  \begin{enumerate}
    \item[(i)] If $q=\infty$ then $\rho(G)$ is dense in $\mathcal S^r$.
    \item[(ii)]  If $q\in \mathbb N$, then $\rho(G)= Z_{q}^{r}\subseteq \mathcal S^{r}$.
        \end{enumerate}
\end{lemma}
\begin{proof}
  Parts (i) and (ii) are Theorem 5.1.3 of \cite{Rudin}.  \end{proof}

The following lemma provides finite approximations to the measures in Lemma \ref{lem:R4dot}.
\begin{lemma}\label{lem:IndependentSets}
  Let $G$ be a  countable abelian group.  For all $\varepsilon, \delta>0$ and $q\in \mathbb N\cup \{\infty\}$, there exists $N=N(\varepsilon,\delta,q)$ such that if $r\geq N$, $\{\chi_1,\dots,\chi_r\}$ is an independent subset of $\widehat{G}_q$, $S\subseteq G$ is measure expanding, $\phi:\{\chi_1,\dots, \chi_r\}\to Z_{q}$ is a function,  $F\subseteq G$ is finite, and
  \[S_{\phi,\varepsilon}:=\Bigl\{g\in S: \frac{1}{r}\sum_{j=1}^r|\phi(\chi_j)-\chi_j(g)| < \varepsilon \Bigr\}
  \]  there is a finite set $S_{\phi,\varepsilon}'\subseteq S_{\phi,\varepsilon}$ such that for every $h\in F$, $S_{\phi,\varepsilon}'+h$ is a set of $\delta$-recurrence.
\end{lemma}

The main argument in the proof of Proposition \ref{prop:KroneckerMeasures} will use the following corollary, so we prove it now. The remainder of this section is dedicated to the proof of Lemma \ref{lem:IndependentSets}.

\begin{corollary}\label{cor:Neighborhoods}
  For all $\varepsilon, \delta>0$, $q\in \mathbb N\cup \{\infty\}$, there exists $N=N(\varepsilon,\delta,q)$ such that if $r\geq N$, $\{\chi_1,\dots,\chi_r\}$ is an independent subset of $\widehat{G}_q$, $S\subseteq G$ is measure expanding, and $F\subseteq G$ is finite, then there are closed neighborhoods $U_{j}$ of $\chi_j$ such that for all functions $f:\widehat{G}\to Z_q$ which are constant on each $U_j$, setting
  \[
  Q_{f,\varepsilon}:=\Bigl\{g\in S:\frac{1}{r}\sum_{j=1}^r \sup_{\psi \in U_j} |f(\psi)-\psi(g)|<\varepsilon\Bigr\}
  \]
we have that for all $h\in F$, $Q_{f,\varepsilon}+h$ is a set of $\delta$-recurrence.
\end{corollary}

\begin{proof}
Fix $q\in \mathbb N\cup \{\infty\}$. Let $\delta,\varepsilon>0$ and let $F\subseteq G$ be finite. Choose a finite collection of functions $\mathcal F$ on $\{\chi_1,\dots,\chi_r\}$ such that every function $\phi:\{\chi_1,\dots,\chi_r\}\to Z_{q}$ can be $\varepsilon/3$-uniformly approximated by an element of $\mathcal F$.

For each $\phi\in \mathcal F$ let $S_{\phi,\varepsilon/3}'$ be the finite set given by Lemma \ref{lem:IndependentSets}, and let $S'=\bigcup_{\phi\in\mathcal F} S_{\phi,\varepsilon/3}'$, so that $S'$ is finite.  Since each function $e_g:\widehat{G}\to \mathcal S^1$ given by $e_g(\chi):=\chi(g)$ is continuous, there is a closed neighborhood $U_j$ of each $\chi_j$ such that $|\chi_j(g)-\psi(g)|<\varepsilon/3$ for all $\psi\in U_j$ and all $g\in S'$.

Now let $f:\widehat{G}\to \mathcal S^1$ be constant on each $U_j$.  We will prove that $S_{\tilde{f},\varepsilon/3}\subseteq Q_{f,\varepsilon}$ for some $\tilde{f}\in \mathcal F$.  To prove this, we first choose $\tilde{f}\in \mathcal F$ such that $|\tilde{f}(\chi_j)-f(\psi)|<\varepsilon/3$ for all $\psi\in U_j$, $j\leq r$.  For each $j$, $\psi\in U_j$, and $g\in S'$, we then have
\[|f(\psi)-\psi(g)|\leq |f(\psi)-\tilde{f}(\chi_j)|+|\tilde{f}(\chi_j)-\chi_j(g)|+|\chi_j(g)-\psi(g)|< \frac{\varepsilon}{3}+|\tilde{f}(\chi_j)-\chi_j(g)|+\frac{\varepsilon}{3},\]
so $\sup_{\psi\in U_j} |f(\psi)-\psi(g)|< |\tilde{f}(\chi_j)-\chi_j(g)| + 2\varepsilon/3.$  Thus if $g\in S_{\tilde{f},\varepsilon/3}'$, we have
\[
\frac{1}{r}\sum_{j=1}^r \sup_{\psi\in U_j} |f(\psi)-\psi(g)|< \frac{1}{r}\sum_{j=1}^r  |\tilde{f}(\chi_j)-\chi_j(g)|+2\varepsilon/3<\varepsilon.
\]
So $S_{\tilde{f},\varepsilon/3}'\subseteq Q_{f,\varepsilon}$.  By construction, $S_{\tilde{f},\varepsilon/3}'+h$ is a set of $\delta$-recurrence for every $\tilde{f}\in \mathcal F$, so $Q_{f,\varepsilon}+h$ is a set of $\delta$-recurrence, as well. \end{proof}

\subsection{Hamming balls}
For $q,r\in \mathbb N$, define $w:Z_q^r\to \mathbb N\cup \{0\}$ by
\[
w\bigl((x_1,\dots,x_r)\bigr):= |\{j\in \{1,\dots,r\}:x_j\neq 1\}|.
\]
For $y\in Z_q^r$, the Hamming ball of radius $k$ around $y$ is defined as
  \begin{equation}\label{eqn:HammingDef}
H(y;r):=\{x\in Z_{q}^{r}: w(x^{-1}y) \leq  k\}.
  \end{equation}
 In other words, the Hamming ball of radius $k$ around $y$ is the set of $r$-tuples of elements of $Z_{q}$ which differ from $y$ in at most $k$ coordinates.
 \begin{observation}\label{obs:HammingL1}
   If $x\in H(y;k)$, then $\sum_{j=1}^r |y_j-x_j|\leq 2k$, since $x_j,y_j\in \mathcal S^1$ for each $j$, and all but $k$ terms of the sum are $0$.
 \end{observation}

\subsection{Recurrence properties of Hamming balls}\label{sec:RecurrenceFinite}

Lemma 4.3 of \cite{GrPop} is the following.   Informally, it says that Hamming balls of moderate radius are sets of recurrence.

\begin{lemma}\label{lem:KleitmanRecurrence}
For all $\varepsilon,\delta>0$ and all $q\in \mathbb N$,  there exists $N=N(\varepsilon,\delta, q)$ such that: if $r\geq N$ and $A\subseteq Z_{q}^{r}$ has $|A| > \delta |Z_{q}^{r}|$, then for all $y\in Z_{q}^{r}$, there are $a_{1},a_{2}\in A$ such that $a_{2}^{-1}a_1\in H(y,\varepsilon r)$.
\end{lemma}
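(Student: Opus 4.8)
The plan is to recast the desired containment geometrically and then drive everything with a concentration-of-measure (blow-up) estimate on the Hamming cube. Equip $Z_{k}^{d}$ with the Hamming distance $\rho(\mb y,\mb z):=\#\{j:y_{j}\neq z_{j}\}$, so that $U_{r}(\mb x)$ is exactly the closed Hamming ball of radius $r$ about $\mb x$, and (reading the radius in the conclusion as $\varepsilon d$) the requirement $\mb a-\mb b\in U_{\varepsilon d}(\mb x)$ says precisely that $\rho(\mb a,\mb b+\mb x)\le \varepsilon d$. Writing $B:=A+\mb x$, which satisfies $|B|=|A|\ge \delta|Z_{k}^{d}|$, the task becomes: produce $\mb a\in A$ lying in the neighborhood $B^{(\varepsilon d)}:=\{\mb y:\rho(\mb y,B)\le \varepsilon d\}$, together with the point of $B$ witnessing $\mb a\in B^{(\varepsilon d)}$.

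First I would establish the blow-up estimate: for $d\ge N(\varepsilon,\delta)$ one has $|B^{(\varepsilon d)}|>(1-\tfrac{\delta}{2})|Z_{k}^{d}|$. This follows from the bounded-differences (McDiarmid/Azuma) inequality applied to $f(\mb y):=\rho(\mb y,B)$ on the uniform product space $Z_{k}^{d}$: altering a single coordinate of $\mb y$ changes $f$ by at most $1$, so $\Pr[f\ge \mathbb E f+t]$ and $\Pr[f\le \mathbb E f-t]$ are each at most $\exp(-2t^{2}/d)$. Since $\Pr[f=0]=|B|/|Z_{k}^{d}|\ge \delta$, taking $t=\mathbb E f$ forces $\mathbb E f\le \sqrt{(d/2)\log(1/\delta)}=o(d)$; feeding this back with $t=\varepsilon d-\mathbb E f\ge \tfrac{\varepsilon}{2}d$ (valid once $d$ is large) yields $\Pr[f>\varepsilon d]\le \exp(-\tfrac{\varepsilon^{2}}{2}d)<\tfrac{\delta}{2}$, which is the claim. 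Note this makes $N$ depend only on $\varepsilon,\delta$.

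The main conclusion is then a one-line counting step: $|A|+|B^{(\varepsilon d)}|>\delta|Z_{k}^{d}|+(1-\tfrac{\delta}{2})|Z_{k}^{d}|=(1+\tfrac{\delta}{2})|Z_{k}^{d}|>|Z_{k}^{d}|$, so $A\cap B^{(\varepsilon d)}\neq\varnothing$. Any $\mb a$ in this intersection has a point $\mb b+\mb x\in B$ (with $\mb b\in A$) at Hamming distance $\le \varepsilon d$, i.e. $\mb a-\mb b\in U_{\varepsilon d}(\mb x)$.

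The step I expect to require the most care is the \emph{distinctness} of $\mb a$ and $\mb b$, since the witness produced above may be the diagonal pair $\mb b=\mb a$; this occurs exactly when $\mb 0\in U_{\varepsilon d}(\mb x)$, that is, when $\mb x$ has Hamming weight at most $\varepsilon d$. When $\mb x$ has weight $>\varepsilon d$ there is nothing to do, as $\mb 0\notin U_{\varepsilon d}(\mb x)$ forces $\mb a\neq \mb b$. To cover the low-weight case I would rerun the argument at the smaller radius $r:=\lceil \varepsilon d/3\rceil$ against a perturbed target $\mb x'$ obtained from $\mb x$ by switching enough zero coordinates (outside the support of $\mb x$) to nonzero values so that the weight of $\mb x'$ strictly exceeds $r$; then $\mb 0\notin U_{r}(\mb x')$ guarantees $\mb a\neq\mb b$, while $\rho(\mb x',\mb x)\le r+1$ keeps $\rho(\mb a-\mb b,\mb x)\le \rho(\mb a-\mb b,\mb x')+\rho(\mb x',\mb x)\le 2r+1\le \varepsilon d$ for $d$ large. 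All the threshold comparisons here are routine, and the case $k=1$ is vacuous; thus the only substantive ingredient is the concentration/blow-up estimate of the second paragraph, with distinctness reduced to this perturbation bookkeeping.
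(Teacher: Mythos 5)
Your proof is correct, but there is nothing in the paper to compare it against: the paper gives no proof of this lemma, importing it verbatim (up to additive versus multiplicative notation) from Lemma 4.3 of \cite{GrPop}, where it is attributed to Kleitman; the classical route there is combinatorial, resting on a vertex-isoperimetric inequality for Hamming balls. Your argument is a genuinely different, self-contained alternative: after translating the conclusion into $A\cap B^{(\varepsilon d)}\neq\varnothing$ with $B=A+\mb x$, you get the blow-up bound $|B^{(\varepsilon d)}|>(1-\tfrac{\delta}{2})|Z_{k}^{d}|$ from the bounded-differences inequality applied to $\mb y\mapsto \rho(\mb y,B)$, and finish by counting. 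This concentration-of-measure substitute for exact isoperimetry is standard and sound; it buys explicit bounds and, as you note, an $N$ depending only on $\varepsilon$ and $\delta$, uniformly in $k$, which is slightly stronger than what the lemma asks for. Your treatment of distinctness (splitting on the Hamming weight of $\mb x$, and in the low-weight case perturbing $\mb x$ to an $\mb x'$ of weight exceeding $r=\lceil \varepsilon d/3\rceil$ so that $\mb 0\notin U_{r}(\mb x')$, then recovering radius $\varepsilon d$ by the triangle inequality) is the right fix and is correctly executed. Two minor points: your reading of the radius ``$r\cdot\varepsilon$'' as $d\cdot\varepsilon$ is certainly the intended one (compare the use of $U_{d\cdot\varepsilon}$ in the proof of Lemma \ref{lem:HammingRecurrence}); and the case $k=1$ is not vacuous but degenerate --- $Z_{1}^{d}$ is a single point, so the statement as written actually fails there for $\delta\le 1$, and your perturbation step tacitly (and necessarily) assumes $k\ge 2$, which is harmless since the application always takes $k>3/\varepsilon$.
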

In the terminology of \S\ref{sec:CayleyGraphs}, the conclusion says $\Cay(H(y,\varepsilon r))$ has independence ratio at most $\delta$, meaning $H(y,\varepsilon r)$ is a set of $\delta$-recurrence in $Z_q^r$.

Facts similar to Lemma \ref{lem:KleitmanRecurrence} have traditionally been used to construct examples like those in Theorem \ref{thm:Main0}, cf.~\cite{Forrest, ForrestThesis,McCutcheonAlexandria,Wolf}.

Most of the proof Lemma \ref{lem:IndependentSets} will be done in the next lemma.

\begin{lemma}\label{lem:Prelim}
  Let $G$ be a  countable abelian group.  For all $\varepsilon, \delta>0$ and $q\in \mathbb N\cup \{\infty\}$, there exists $N=N(\varepsilon,\delta,q)$ such that if $r\geq N$, $\{\chi_1,\dots,\chi_r\}\subseteq \widehat{G}_q$ is independent, and $\phi:\{\chi_1,\dots,\chi_r\}\to Z_{q}$, then with
  \[P_{\phi}:=\Bigl\{g\in G: \frac{1}{r}\sum_{j=1}^r|\phi(\chi_j)-\chi_j(g)| < \varepsilon \Bigr\}
  \]  there is set of $\delta$-recurrence $E$ and a Bohr neighborhood $B$ of $0$ in $G$ such that $E+B\subseteq P_{\phi}$.
\end{lemma}

\begin{proof}
     We first prove the lemma for $q=\infty$. In this case,      choose $q'>4\pi/\varepsilon$, and make $N\geq N(\varepsilon/8,\delta,q')$ in Lemma \ref{lem:KleitmanRecurrence}. Assume $r\geq N$, and fix an independent set of characters $\{\chi_1,\dots,\chi_r\}$ of infinite order.

  Consider the homomorphism $\rho:G\to \mathcal S^r$ given by $\rho(g) = (\chi_1(g),\dots,\chi_r(g))$. Then  $\overline{\rho(G)}=\mathcal S^r$, by Lemma \ref{lem:Kronecker}. For each $j$, choose $y_j\in Z_{q'}$ so that $|\phi(\chi_j)-y_j|<\varepsilon/2$, as our choice of $q'$ permits.  Let 
  \[
  U: = \Bigl\{(z_{1},\dots,z_{r})\in \mathcal S^{r}: \frac{1}{r}\sum_{j=1}^r |y_j-z_j|<\varepsilon/4\Bigr\}.
  \]
  Then $U$ is an open subset of $\mathcal S^r$. By Observation \ref{obs:HammingL1}, $U$ contains $H(y,\varepsilon r/8)$ in $Z_{q'}^r$.  Our choice of $r$ makes $H(y,\varepsilon r/8)$ a set of $\delta$-recurrence in $Z_{q'}^r$, so $\rho^{-1}(U)$ is a set of $\delta$-recurrence in $G$, by Lemma \ref{lem:CopyCayley}.

  Let $B$ be the Bohr neighborhood $\{h\in G: \max_{j\leq r} |\chi_j(h)-1|<\varepsilon/4\}$.  We claim that $\rho^{-1}(U)+B\subseteq P_{\phi}$.  To see this, note that if $g\in \rho^{-1}(U)$ and $h\in B$, we have $\frac{1}{r}\sum_{j=1}^r |y_j-\chi_j(g)|<\varepsilon/4$ and $|\chi_j(g)-\chi_j(g+h)|=|1-\chi_j(h)|<\varepsilon/4$ for each $j$.  Then
  \begin{align*}
    \frac{1}{r}\sum_{j=1}^r |\phi(\chi_j)- \chi_j(g+h)|      &\leq \frac{1}{r}\sum_{j=1}^r |\phi(\chi_j)-y_j|+|y_j-\chi_j(g)| + |\chi_j(g)-\chi_j(g+h)|\\
    & < \varepsilon/2 + \varepsilon/4 + \varepsilon/4.
  \end{align*}
So $\frac{1}{r}\sum_{j=1}^r |\phi(\chi_j)- \chi_j(g+h)|  <\varepsilon$, meaning $g+h\in P_{\phi}$.  Thus $\rho^{-1}(U)+B\subseteq P_{\phi}$.  This completes the proof when $q=\infty$.

When $q$ is finite the lemma can be proved by repeating the argument above, taking $q'=q$ instead of $q'>4\pi/\varepsilon$, since $\phi$ only takes values in $Z_q$. \end{proof}

\begin{proof}[Proof of Lemma \ref{lem:IndependentSets}]
Fix $\varepsilon,\delta>0$ and $q\in \mathbb N\cup \{\infty\}$.   Let $N>N(\varepsilon,\delta/2,q)$ in Lemma \ref{lem:Prelim}.  Fix $\chi_j$ and $\phi$ as in the hypothesis of Lemma \ref{lem:IndependentSets}.  Note that $S_{\phi,\varepsilon}=S\cap P_{\phi}$, where $P_{\phi}$ is defined in Lemma \ref{lem:Prelim}.  By Lemma \ref{lem:UniformRecurrence}, it suffices to prove that every translate of $S\cap P_{\phi}$ is a set of $\delta/2$-recurrence.  To this end, note that for each $h\in G$, $P_{\phi}+h=P_{\phi'}$, where $\phi'(\chi_j)=\chi_j(h)\phi(\chi_j)$, since $|\phi(\chi_j)-\chi_j(g-h)|=|\chi_j(h)\phi(\chi_j)-\chi_j(g)|$.

Thus, by Lemma \ref{lem:Prelim}, we get that for each $h$, $P_{\phi}+h$ contains a set of the form $E+B$, where $E$ is a set of $\delta/2$-recurrence and $B$ is a Bohr neighborhood of $0$ in $G$.  Then Lemma \ref{lem:Aura} implies $(S+h)\cap (P_{\phi}+h)$ is a set of $\delta/2$-recurrence for each $h\in G$.  This means every translate of $S\cap P_{\phi}$ is a set of $\delta/2$-recurrence.
\end{proof}

\section{Independent sets in compact groups}\label{sec:IndependentSets}

 Lemma \ref{lem:ExistIndependent} below will be used to prove Proposition \ref{prop:KroneckerMeasures}.
\begin{lemma}\label{lem:EssentialOrder}
  Let $q_1,\dots,q_r\in \mathbb N$, $q_i\geq 2$ for each $i$. Let $q=\operatorname{lcm}(q_1,\dots,q_r)$.  Then the set of elements of order $q$ in $Z:=D_{q_1}\times \cdots \times D_{q_r}$ is dense and open in $Z$.
\end{lemma}

\begin{proof}
  Writing an element of $Z$ as $z=(z_{i,j})_{i\in \mathbb N,j\in [r]}$, it is enough to observe that for each $n$, the set $U_n:=\{z: z_{i,j}=1 \text{ if } i= n\}$ is open and consists of elements of order $q$, while $\bigcup_{n\in \mathbb N} U_n$ is dense in $Z$.
\end{proof}

\begin{lemma}\label{lem:ExistIndependent}
Let $Z$ be an infinite compact abelian group and let $V_{1},\dots, V_{k}$ be mutually disjoint nonempty open subsets of $Z$.

If $Z$ has infinite exponent, then there exist $z_j\in V_{j}$ with infinite order such that $\{z_{1},\dots, z_{k}\}$ is independent.

If $Z=D_{q_1}\times \cdots \times D_{q_r}$  then there exist $z_{j}\in V_{j}$ of order $q=\operatorname{lcm}(q_1,\dots,q_k)$, such that $\{z_{1},\dots, z_{k}\}$ is independent.
\end{lemma}

\begin{proof}  The infinite exponent case is an immediate consequence of Lemma 5.2.3(a) of \cite{Rudin}. To prove the remaining case we follow the proof of \cite[Lemma 5.2.3(b)]{Rudin}.  So assume $Z=D_{q_1}\times \cdots \times D_{q_r}$.  For $y\in Z$, $m\in \mathbb N$, let $E_{m,y}:=\{z\in Z: mz= y\}$.  Note that if $z, z'\in E_{m,y}$ then $z-z'\in E_{m,0}$, and that $E_{m,0}$ is a closed subgroup of $Z$.  Thus each $E_{m,y}$ is contained in a coset of $E_{m,0}$.  Furthermore, if $m$ is not divisible by $q$, then $E_{m,0}$ has empty interior, by Lemma \ref{lem:EssentialOrder}. For such $m$ and every $y\in Z$,  $E_{m,y}$ has empty interior, as well. Now
  \[W:= Z\setminus \bigcup_{m\geq 1,q\nmid m} E_{m,0}\] dense and open (as there are only finitely many distinct sets $E_{m,0}$), and $V_{j}':=V_{j}\cap W$, $1\leq j\leq r$ defines a collection of mutually disjoint open sets.

  Choose $z_{1}\in V_{1}'$. Note that $nz_{1}=0$ implies $q$ divides $n$, since $V_{1}'\subseteq W$.   Thus $\{z_{1}\}$ is independent and its one element has order $q$. Proceeding by induction, suppose $z_{1},\dots, z_{j}$, $z_{i}\in V_{i}'$, $j<k$ are chosen so that $\{z_{1},\dots, z_{j}\}$ is an independent set of elements of order $q$.  We will find $z_{j+1}\in V_{j+1}'$ so that $\{z_{1},\dots, z_{j+1}\}$ is independent and $z_{j+1}$ has order $q$.  Note that the group $H$ generated by $\{z_{1},\dots, z_{j}\}$ is finite, as $Z$ has finite exponent.  Then $U:=Z\setminus \bigcup\{E_{m,y}: m\in \mathbb N, q\nmid m, y\in H\}$ is dense, so Lemma \ref{lem:EssentialOrder} provides a $z_{j+1}\in U\cap V_{j+1}'$ having order $q$.  Now we prove that $\{z_{1},\dots, z_{j+1}\}$ is independent, so assume $n_{1},\dots,n_{j+1}\in \mathbb Z$, and that $n_{1}z_{1}+\cdots + n_{j+1}z_{j+1}=0$. Since $z_{j+1}\in U$,  the equation $-n_{j+1}z_{j+1}= n_{1}z_{1}+\cdots+n_{j}z_{j}$ implies $q$ divides $n_{j+1}$. For such $n_i$, it follows that $n_{1}z_{1}+\cdots+n_{j}z_{j}=0$, so the independence of $\{z_{1},\dots, z_{j}\}$ implies $q$ divides each of the $n_{i}$.  We have shown that if $n_{1}z_{1}+\cdots+n_{j+1}z_{j+1}=0$, then $q$ divides $n_{i}$ for each $i$, meaning $\{z_1,\dots,z_{j+1}\}$ is independent.
 \end{proof}

\section{Finding independence via continuous measures}\label{sec:Measures}

\subsection{Cantor sets}\label{sec:Cantor}

Suppose $\Gamma$ is a non-discrete locally compact metrizable group, and let $L\subseteq \Gamma$ be a perfect subset of $\Gamma$ (so that $L$ is compact and has no isolated points).  Furthermore assume $d_{\Gamma}$ is a metric on $\Gamma$ generating its topology.  The following general procedure will construct a subset of $L$ homeomorphic to the Cantor set.

 Let $(b_{k})_{k\in \mathbb N}$ be a sequence of integers, $b_{k}\geq 2$ for each $k$.   Let $N_{1}=b_{1}$, and $N_{k+1}=b_{k}N_{k}$.   Let $\mathcal U^{(k)}=\{U_{1}^{(k)},\dots, U_{N_{k}}^{(k)}\}$ be a collection of $N_{k}$ mutually disjoint compact neighborhoods in $L$, so that

\begin{enumerate}
  \item[(C1)] for each $k$, $j$, $U_{j}^{(k+1)}\subseteq U_{l}^{(k)}$ for some $l$;

\item[(C2)] each $U\in \mathcal U^{(k)}$ contains exactly $b_{k}$ elements of $\mathcal U^{(k+1)}$;

\item[(C3)]$\lim_{k\to \infty} \sup_{U\in \mathcal U^{(k)}} \operatorname{diam}(U)=0$.
 \end{enumerate}
Then $K:=\bigcap_{k=1}^{\infty} \bigcup \mathcal U^{(k)}$ is homeomorphic to the Cantor set, as it is compact, totally disconnected, and has no isolated points.
\subsection{Cantor measure}\label{sec:CantorMeasure} With $K$ as in \S\ref{sec:Cantor}, we construct a probability measure $\sigma_{\mathcal U}$ on $K$ associated to the sequence $\mathcal U^{(k)}$: for each $k$ and each $U\in \mathcal U^{(k)}$, choose a point $x_{U}\in U$. Consider the measures $\sigma_{k}:=\frac{1}{|\mathcal U^{(k)}|}\sum_{U\in \mathcal U^{(k)}} \delta_{x_{U}}$, where $\delta_{x_{U}}$ is the unit point mass at $x_{U}$.   Then $\lim_{k\to \infty} \sigma_{k}$ exists in the weak$^{*}$ topology on $C(\Gamma)$; let $\sigma_{\mathcal U}$ be the limit.  The notation is appropriate as $\sigma_{\mathcal U}$ does not depend on the exact choice of $x_{U}\in U$.  We call $\sigma_{\mathcal U}$ the \emph{Cantor measure on $K$ induced by $\mathcal U$}.  Note that $\sigma_{\mathcal U}$ is continuous: if $x\in K$ and $k\in \mathbb N$, we have $x\in U$ for some $U\in \mathcal U^{(k)}$.  For each $n\geq k$, we have $\sigma_{n}(U)=\frac{1}{|\mathcal U^{(k)}|}$, so $\sigma_{\mathcal U}(\{x\}) \leq \frac{1}{|\mathcal U^{(k)}|}$ for every $k$, meaning $\sigma_{\mathcal U}(\{x\})=0$.

\subsection{Saeki measures}\label{sec:SaekiGeneral} We recall Definition \ref{def:SaekiMeasure}:   if $G$ is a countable abelian group, $S\subseteq G$, and $\mathcal P$ is a collection of subsets of $G$, we say that a positive Borel measure $\sigma$ on $\widehat{G}$ is a $\mathcal P$-\emph{Saeki measure} if for all $\varepsilon>0$, and every continuous $f:\widehat{G} \to \mathcal S^{1}$,
  \begin{equation}\label{eqn:SMeasureDef}
  \{g\in G: \textstyle\int|f(\chi)-\chi(g)| \,d\sigma(\chi)<\varepsilon \}\in \mathcal P.
  \end{equation}
   If $q\in \mathbb N$, we say that $\sigma$ is a $\mathcal P^{(q)}$-Saeki measure if for all $\varepsilon>0$ and every measurable $f:\widehat G \to Z_{q}$, the inclusion (\ref{eqn:SMeasureDef}) is satisfied. (Recall that $Z_q$ denotes the $q^\text{th}$ roots of unity.)
   
We now introduce some notation for the proof of Lemma \ref{lem:AbstractSaeki}.

Fix a sequence of integers $b_{k}$ with $b_{k}\geq 2$ for all $k$, and a sequence $\mathcal U^{(k)}$ of collections of compact neighborhoods satisfying conditions (C1)-(C3) listed in \S\ref{sec:Cantor}.  For $f:\widehat{G}\to \mathbb C$, let
\begin{equation}\label{eqn:wkeps}
M_{k}(f):= \frac{1}{|\mathcal U^{(k)}|}\sum_{U\in \mathcal U^{(k)}} \sup_{\chi \in U}|f(\chi)|.
\end{equation}
\begin{observation}\label{obs:Decreasing}Let $f:\widehat{G}\to \mathbb C$ be continuous, and let $\sigma$ be the Cantor measure associated to $\mathcal U$. Then
\begin{enumerate}	
\item[(i)]  $M_{k}(f)$ is a decreasing function of $k$;
\item[(ii)]	$\int |f|\, d\sigma \leq \lim_{k\to\infty}  M_{k}(f)$;
\item[(iii)] If $f':\widehat{G}\to \mathbb C$, then $M_{k}(f+f')\leq M_{k}(f)+M_{k}(f')$.
    \end{enumerate}
\end{observation}
To prove part (i), write $\frac{1}{|\mathcal U^{(k+1)}|}\sum_{U\in \mathcal{U}^{(k+1)}}$ as $\frac{1}{|\mathcal U^{(k)}|}\sum_{U\in \mathcal{U}^{(k)}}\frac{1}{b_k}\sum_{V\in \mathcal U^{(k+1)}, V\subseteq U}$, and note that for each $k$, $\frac{1}{b_k}\sum_{V\in \mathcal U^{(k+1)}, V\subseteq U} \sup_{\chi\in V} |f(\chi)|\leq \sup_{\chi \in U}|f(\chi)|$.

Part (ii) follows from the obvious bound $\int |f|\, d\sigma_k \leq M_k(f)$. Part (iii) is just the pointwise triangle inequality $|f+g|\leq |f|+|g|$.

For $g\in G$, define  $e_g:\widehat{G}\to \mathcal S^1$ by $e_g(\chi):=\chi(g)$. Then $e_g$ is continuous.

For $\varepsilon>0$ and $f:\widehat{G}\to \mathcal S^{1}$, let
   \begin{equation}\label{eqn:DefQ}
  Q(k,f,\varepsilon):=   \{g\in G: M_{k}(f-e_g)<\varepsilon\}.
\end{equation}

A collection $\mathcal P$ of sets is \emph{upward closed} if for all $A, B$ with $A\subseteq B$, $A \in \mathcal P$ implies $B\in \mathcal P$.

\begin{lemma}\label{lem:AbstractSaeki}  Let $\mathcal P$, $\mathcal P_{1}$, $\mathcal P_{2}$, $\dots$ be upward closed collections of subsets of $G$ satisfying
\begin{enumerate}
\item[(P1)] if $E\in \mathcal P_{k}$ for infinitely many $k$, then $E\in \mathcal P$.  \end{enumerate}
For each $k\in \mathbb N$, let $b_{k}\in \mathbb N$, $b_{k}\geq 2$, and let $\mathcal U^{(k)}$ be a collection of compact neighborhoods in $\widehat{G}$ satisfying (C1)-(C3).  Assume 
\begin{enumerate}\item[(S1)]     for all $\varepsilon>0$ and continuous $f:\widehat{G}\to \mathcal S^1$,  $Q(k,f,\varepsilon)\in \mathcal P_{k}$ for infinitely many $k$, 
\end{enumerate}
with $Q(k,f,\varepsilon)$ as defined in (\ref{eqn:DefQ}).  Let $K:=\bigcap_{k=1}^{\infty} \bigcup \mathcal U^{(k)}$. Then the Cantor measure $\sigma:=\sigma_{\mathcal U}$ on $K$ is a continuous $\mathcal P$-Saeki measure.

If instead of (S1) we have
\begin{enumerate}
 \item[(S1$_{q}$)]  for all $\varepsilon>0$ and continuous $f: \widehat{G}\to Z_q$,  $Q(k,f,\varepsilon)\in \mathcal P_{k}$ for infinitely many $k$,
\end{enumerate}
 then $\sigma$ is a continuous $\mathcal P^{(q)}$-Saeki measure.
\end{lemma}

\begin{proof}
By definition, $\sigma$ is the weak$^{*}$ limit of the measures $\sigma_{k}:=\frac{1}{|\mathcal U^{(k)}|}\sum_{U\in \mathcal U^{(k)}} \delta_{x_{U}}$, where each $x_{U}\in U$. Continuity of $\sigma$ was proved in \S\ref{sec:CantorMeasure}.

  Assuming (S1) holds, we will prove that $\sigma$ is a $\mathcal P$-Saeki measure.  It suffices to prove that if $f:\widehat{G}\to \mathcal S^{1}$ is continuous and $\varepsilon>0$, then
    \begin{equation}\label{eqn:IsInP}
      S_{f,\varepsilon}:=\{g\in S: \textstyle \int |f-e_g| \, d\sigma <\varepsilon\}\in \mathcal P,
    \end{equation}so let $f:\widehat{G}\to\mathcal S^1$ be  continuous and let $\varepsilon>0$.  We will prove  (\ref{eqn:IsInP}) by showing that $S_{f,\varepsilon}$ is in $\mathcal P_{k}$ for infinitely many $k$. By (P1) this will imply $S_{f,\varepsilon}\in \mathcal P$.  To prove $S_{f,\varepsilon}\in \mathcal P_{k}$, we claim that for each $k$,
    \begin{equation}\label{eqn:SfepsInQ}
    Q(k,f,\varepsilon)\subseteq S_{f,\varepsilon}
    \end{equation}
    To see this, note that $\int |f-e_g|\, d\sigma\leq M_k(f-e_g)$ by Observation \ref{obs:Decreasing} (ii), while $g\in Q(k,f,\varepsilon)$ if and only if $M_k(f-e_g)<\varepsilon$.  Now (\ref{eqn:SfepsInQ}), the upward-closedness of $\mathcal P_{k}$, and the hypothesis (S1) together imply $S_{f,\varepsilon}\in \mathcal P_{k}$ for infinitely many $k$.  Our assumption (P1) then implies $S_{f,\varepsilon}\in \mathcal P$, so we have proved that $\sigma$ is a $\mathcal P$-Saeki measure.

The proof that $\sigma$ is a $\mathcal P^{(q)}$-Saeki measure under assumption (S1$_{q}$) is nearly identical: we may assume that $\varepsilon>0$ and $f:\widehat{G}\to Z_{q}$ rather than $f:\widehat{G}\to \mathcal S^{1}$, then prove that $S_{f,\varepsilon}\in \mathcal P$ in exactly the same manner as above.  \end{proof}

\subsection{Construction of Saeki measures for recurrence}\label{sec:Construction}

\begin{proof}[Proof of Lemma \ref{lem:R4dot}] We first prove Part 1.  Assume $G$ has infinite exponent and enumerate $G$ as $\{g_{1},g_{2},\dots\}$.  Let $W\subseteq \widehat{G}$ be open.  Let $S\subseteq G$ be measure expanding, and let $\mathcal R_{k}S$ be the collection of sets $E\subseteq S$ such that $E+g_{j}$ is a set of $1/k$-recurrence for all $j\leq k$.     We will apply Lemma \ref{lem:AbstractSaeki} with $\mathcal R^{\bullet}S$ in place of $\mathcal P$, and $\mathcal R_{k}S$ in place of $\mathcal P_{k}$.

  Let $N_{0}=1$, $b_{0}=1$, and for each $m\in \mathbb N$, let $N_{n}> N(1/n,1/n,\infty)$ in Corollary \ref{cor:Neighborhoods}.  For convenience, make $N_{n}$ have the recursive form $N_{n+1}=b_{n}N_{n}$ for some sequence of natural numbers $b_{n}\geq 2$.  Write $[n]$ for the interval of integers $\{1,\dots, n\}$.

   We define a sequence of collections $\mathcal U^{(k)}$ of closed neighborhoods in $\widehat{G}$.  Let $\mathcal U^{(0)}:=\{U^{(0)}_{1}\}$, where $U^{(0)}_{1}$ is an arbitrary closed neighborhood contained in $W$.   Suppose $\mathcal U^{(k)}$ is defined for $k=0,\dots, n-1$.  Apply Lemma \ref{lem:ExistIndependent} to find, for each $U\in \mathcal U^{(n-1)}$, characters   $\chi_{U,1}, \chi_{U,2}, \dots, \chi_{U,b_{n-1}}\in U$ such that  $K_{n}:=\{\chi_{U,j}: U\in \mathcal U^{(n-1)}, j\in [b_{n-1}]\}$ is independent, and each $\chi \in K_n$ has infinite order.  Note that $|K_{n}|= b_{n-1} N_{n-1}= N_{n}$.

  By Corollary \ref{cor:Neighborhoods}, choose closed neighborhoods $U_{\chi}$ of each $\chi \in K_n$, such that for each $f:\widehat{G}\to \mathcal S^1$ which is constant on every $U_{\chi}$, setting
  \[
  S_{f,n}:=\Bigl\{g\in S: \frac{1}{|K_n|}\sum_{\chi\in K_n}\sup_{\psi\in U_\chi}|f(\psi)-e_g(\psi)|<\frac{1}{n}\Bigr\}
  \]
   we have that $S_{f,n}+g_j$ is a set of $\frac{1}{n}$-recurrence for every $j\leq n$.  We also insist that the $U_\chi$ are mutually disjoint and have diameter at most $1/n$. Then let $\mathcal U^{(n)}=\{U_{\chi}:\chi \in K_{n}\}$.  This completes the inductive definition of $\mathcal U^{(n)}$.  As in \S\ref{sec:Cantor}, we set $K=\bigcap_{k=1}^\infty \bigcup \mathcal U^{(k)}$, and let $\sigma$ be the associated Cantor measure.  We will prove that $\sigma$ is an $\mathcal R^{\bullet}S$-Saeki measure.

Recall the definition of $M_{n}$ from (\ref{eqn:wkeps}) and  $Q(n,f,\frac{1}{n}):=\{g\in S: M_n(f-e_g)<\frac{1}{n}\}$.  Note that if $f:\widehat{G}\to \mathcal S^{1}$ is constant on every $U\in \mathcal U^{(n)}$, then $S_{f,n}= Q(n,f,\tfrac{1}{n})$.  So our choice of $S_{f,n}$ implies
\begin{equation}\label{eqn:QnIsInRn}
  Q(n,f,\tfrac{1}{n}) \in \mathcal R_{n}S.
\end{equation}

We will show that the hypothesis of Lemma \ref{lem:AbstractSaeki} is satisfied with $\mathcal R^{\bullet}S$ in place of $\mathcal P$ and $\mathcal R_{n}S$ in place of $\mathcal P_{n}$.

Here condition (P1) simply means that if, for infinitely many $n$, $E+g_{j}$ is a set of $\frac{1}{n}$-recurrence for every $j\leq n$, then every translate of $E$ is a set of recurrence; this implication follows immediately the relevant definitions.

(C1), (C2), and (C3) are easily verified from the definition of $\mathcal U^{(n)}$.

We now verify (S1).  Let $f:\widehat{G}\to \mathcal S^1$ be continuous and $\varepsilon>0$.  By our construction of $K$, there is an $N\in \mathbb N$ and a continuous $\tilde{f}:\widehat{G}\to Z_N$ which is constant on each $U\in \mathcal U^{(N)}$, such that $\sup_{\chi\in \bigcup \mathcal U^{(N)}} |f(\chi)-\tilde{f}(\chi)|<\varepsilon/2$.  We will prove that
\begin{equation}\label{eqn:Monotone}
  Q(N,\tilde{f},\varepsilon/2) \subseteq Q(n,f,\varepsilon) \qquad \text{for each } n\geq N.
\end{equation}
Fixing $n\geq N$, $g\in Q(N,\tilde{f},\varepsilon/2)$, we have \begin{align*}
M_{n}(f-e_g)&=M_{n}(f-\tilde{f}+\tilde{f}-e_g)\\
&\leq M_{n}(f-\tilde{f})+M_{n}(\tilde{f}-e_g)  && \text{by Observation \ref{obs:Decreasing} (iii)}\\
&\leq \frac{\varepsilon}{2} + M_{N}(\tilde{f}-e_g) && \text{by Observation \ref{obs:Decreasing} (i)}\\
&\leq \frac{\varepsilon}{2} +\frac{\varepsilon}{2} && \text{since } g\in Q(N,\tilde{f},\varepsilon/2).
\end{align*}
 The containment (\ref{eqn:Monotone}) and the inclusion (\ref{eqn:QnIsInRn}) now imply that for all $\varepsilon>0$, $Q(n,f,\varepsilon)\in \mathcal R_nS$ holds for infinitely many $n$, so (S1) is satisfied.

Having verified the hypotheses of Lemma \ref{lem:AbstractSaeki}, we conclude that  $K:=\bigcap_{n=1}^{\infty} \bigcup \mathcal U^{(n)}$ is a compact set supporting a continuous $\mathcal{R}^{\bullet}S$-Saeki measure.  This completes the proof of Part 1.

To prove Part 2, assume $G = \Sigma_{q_1}\oplus \cdots \oplus \Sigma_{q_r}$, and let $q$ be the exponent of $G$.  The construction of $K_n$, $\mathcal U^{(n)}$, and $S_{n,f}'$ is the same, except that when applying Lemma \ref{lem:ExistIndependent}, we choose the $\chi_{U,j}$ to each have order $q$.
\end{proof}

\section{The exceptional groups}\label{sec:arbitrary}

We prove Theorem \ref{thm:MainFinite} and Corollary \ref{cor:Main} after some preliminary lemmas.

\subsection{Rigidity sequences in products}

\begin{lemma}\label{lem:ExtendGaussian} Let $F$ be a finite abelian group and $H$ a countably infinite abelian group. Let $G=F\times H$, let $(g_n)=(0,h_n)$ be a sequence of elements of $\{0\}\times H$, and suppose that there is a free weak mixing $H$-system $(X,\mu,T)$ for which $(h_n)$ is a rigidity sequence.  Then  there is a free weak mixing $G$-system $(Y,\nu,S)$ for which $(g_n)$ is a rigidity sequence.
\end{lemma}

\begin{proof}
  Assuming $(h_n)$ is a rigidity sequence for a free weak mixing $H$-system, by Lemma \ref{lem:FourierCharacterizeFree} there is a continuous probability measure $\sigma$ on $\widehat{H}$ such that $\lim_{n\to\infty} \hat{\sigma}(h_n)=1$ and the support of $\sigma$ generates a dense subgroup of $\widehat{H}$.  We identify $\widehat{F\times H}$ with $\widehat{F}\times \widehat{H}$, and define a measure $\eta:=m\times \sigma$ on $\widehat{F\times H}$, where $m$ is normalized counting measure on $\widehat{F}$.

  If $(\psi,\chi)\in \widehat{F}\times \widehat{H}$, we have $(\psi,\chi)(a,h)=\psi(a)\chi(h)$.  For $h\in H$, we then have $\hat{\eta}(0,h)=\int \psi(0)\chi(h)\, dm(\psi)\, d\sigma(\chi)= \hat{\sigma}(h)$.  Thus $\lim_{n\to\infty} \hat{\eta}(0,h_n)=\lim_{n\to\infty} \hat{\sigma}(h_n)=1$.  It is easy to check that $\eta$ is continuous and its support generates a dense subgroup of $\widehat{F}\times \widehat{H}$.  By Lemma \ref{lem:FourierCharacterizeFree}, there is a free weak mixing $G$-system for which $(g_n)$ is a rigidity sequence.
\end{proof}

\subsection{Measure expanding sets in subgroups}\label{sec:Subgroups}

\begin{lemma}\label{lem:FiniteIndex}
  Let $G$ be a countable abelian group, $H\subseteq G$ a finite index subgroup, and $S\subseteq G$. Then

  \begin{enumerate}
    \item[1.]  $S$ is a set of recurrence if and only if $S\cap H$ is a set of recurrence.

    \item[2.]  $S$ is a set of strong recurrence if and only if $S\cap H$ is a set of strong recurrence.

    \item[3.]  $S$ is measure expanding if and only if for all $g\in G$, the set $(S-g)\cap H$ is a set of recurrence.
\end{enumerate}

\end{lemma}

\begin{proof} Let $(X,\mu,T)$ be a $G$-system and let $D\subseteq X$ have $\mu(D)>0$. Consider the group rotation $G$-system $(Z,m,T_{\rho})$ where $Z:=G/H$ is finite, $m$ is normalized counting measure on $Z$, and $\rho:G\to H$.  Let $E:=\{0_{Z}\}$ and $C:=D\times E$. Consider the product system $ (X\times Z, \mu\times m, T\times T_{\rho})$, and observe that $\mu\times m(C)>0$, while $C\cap (T\times T_{\rho})^{g}C=\varnothing$ unless $g\in H$.

 To prove Part 1, assume  $S$ is a set of recurrence.   Then there is a $g\in S$ such that $\mu\times m(C\cap (T\times T_{\rho})^{g}C)>0$.  For such $g$, we have $\mu(D\cap T^{g}D)>0$, and as mentioned above we must have $g\in H$, as well.  Since $(X,\mu,T)$ is an arbitrary $G$-system and $D$ is an arbitrary set of positive measure, we have shown that $S\cap H$ is a set of recurrence.

 The reverse implication in Part 1 is obvious.

  Part 2.  Assume $S$ is a set of strong recurrence.  Since $S$ is a set of strong recurrence, there is a $c>0$ such that $\mu\times m(C\cap (T\times T_{\rho})^{g}C)>c$ for infinitely many $g\in S$, and for such $g$ we have $\mu(D\cap T^{g}D)>c$.   These $g$ belong to $H$, as well, since $C\cap (T\times T_{\rho})^{g}C$ is empty otherwise.  Thus $S\cap H$ is a set of strong recurrence.

  Again the reverse implication in Part 2 is obvious.

  Part 3 follows immediately from Part 1. \end{proof}

\begin{proof}[Proof of Theorem \ref{thm:MainFinite}]
 Let $r, q_1,\dots,q_r\in \mathbb N$, $q_{i}\geq 2$, and let $F$ be a finite abelian group.  Let $G=F\oplus (\Sigma_{q_1} \oplus\cdots \oplus \Sigma_{q_r})$, so that $H:= \{0\}\oplus \Sigma_{q_1} \oplus\cdots \oplus \Sigma_{q_r}$ is a finite index subgroup of $G$.

 Assume $S\subseteq G$ is such that for all $h\in H$, $S+h$ is a set of recurrence.  Then $S\cap H$ has this property as well, by Lemma \ref{lem:FiniteIndex}.  Identifying $S\cap H$ with a subset of $\Sigma_{q_1} \oplus\cdots \oplus \Sigma_{q_r}$, we see that $S\cap H$ is measure expanding (for measure preserving $H$-systems).  Theorem \ref{thm:Main0} then provides a sequence $(s_n)$ of elements of $S\cap H$ such that $S':=\{s_n:n\in \mathbb N\}$ is measure expanding for $H$-systems, and for all $h\in H$, $(s_n+h)_{n\in \mathbb N}$ is a rigidity sequence for a free weak mixing $H$-system.   For each $h\in H$, Lemma \ref{lem:ExtendGaussian} then provides a free weak mixing $G$-system for which $(s_{n}+h)_{n\in \mathbb N}$ is a rigidity sequence. It is easy to verify that for all $h\in H$, $S'+h$ is a set of recurrence for $G$-systems.
\end{proof}

\subsection{Proof of Corollary \ref{cor:Main}}

We will need the following lemmas.

\begin{lemma}\label{lem:Translate}
  Suppose $S$ is a set of rigidity for the $G$-system $(X,\mu,T)$, $h\in G$, and $T^{h}$ is a nontrivial measure preserving transformation of $X$.  Then $S+h$ is not a set of strong recurrence.
\end{lemma}

\begin{proof}
  Enumerate $S$ as $(s_{n})_{n\in \mathbb N}$, a rigidity sequence for $(X,\mu, T)$.  Since $T^{h}$ is nontrivial, there is a set $D\subseteq X$ with $\mu(D)>0$ such that $T^{-h} D\cap  D=\varnothing.$  Then
  \[\lim_{n\to \infty} \mu(D\cap T^{s_{n}+h}D)=\lim_{n\to\infty} \mu(T^{-h}D\cap T^{s_{n}}D)=\mu(T^{-h}D\cap D)=0.\]  This shows that $S+h$ is not a set of strong recurrence.
\end{proof}

\begin{lemma}\label{lem:Divisibility}
	Let $S_{1}, S_{2}\subseteq G$.  If $S_{1}$ and $S_{2}$ are both not sets of strong recurrence, then $S_{1}\cup S_{2}$ is not a set of strong recurrence.
\end{lemma}

\begin{proof}  For $i=1,2$, let $(X_{i},\mu_{i},T_{i})$ be $G$-systems with sets $D_{i}\subseteq X_{i}$ having $\mu_{i}(D_{i})>0$, such that for all $c>0$, $E_{i}:=\{s\in S_{i}:\mu_{i}(D_{i}\cap T_{i}^{s}D_{i})>c\}$ is finite.  Let $(X,\mu,T)$ be the product system $(X_{1}\times X_{2}, \mu_{1}\times \mu_{2}, T_{1}\times T_{2})$, and let $D=D_{1}\times D_{2}$.  Then $\mu(D)>0$, and  for all $c>0$, $\{s\in S_{1}\cup S_{2}: \mu(D\cap T^{s} D)>c\}$ is contained in $E_{1}\cup E_{2}$, and is therefore finite. Thus $S_{1}\cup S_{2}$ is not a set of strong recurrence. \end{proof}

\begin{proof}[Proof of Corollary \ref{cor:Main}.]
When $G$ has infinite exponent or $G=\Sigma_{q_1}\oplus \cdots \oplus \Sigma_{q_r}$, Corollary \ref{cor:Main} follows immediately from Theorem \ref{thm:Main0} and Lemma \ref{lem:Translate}.  If $G$ does not have this form, it can be written as $F\times H$, where $F$ is finite and $H=\Sigma_{q_1}\oplus \cdots \oplus \Sigma_{q_r}$.  If $S\subseteq G$ is measure expanding, then Lemma \ref{lem:FiniteIndex} implies that $(S-g)\cap H$ is a set of recurrence for every $g\in G$.  Let $g_{1},\dots, g_{l}$ be a collection of coset representatives of $H$, and for each $i$, apply Theorem \ref{thm:MainFinite} and Lemma \ref{lem:Translate} to choose a set $S_{i}'\subseteq S\cap (H+g_{i})$ so that $(S_{i}'-g)\cap H$ is a set of recurrence for every $g\in H+g_{i}$, while no translate of $(S_{i}'-g)\cap H$ is a set of strong recurrence.   Thus for each $i$, no translate of $S_{i}'$ is a set of strong recurrence.  Setting $S'=\bigcup_{i=1}^{l} S_{i}'$, we have that every translate of $S'$ is a set of recurrence, while Lemma \ref{lem:Divisibility} implies no translate of $S'$ is a set of strong recurrence.
\end{proof}

\bibliographystyle{amsplain}
\frenchspacing
\bibliography{generic_density}
\end{document}